\pgfplotsset{compat=newest}
\DeclareMathOperator{\im}{im} 
\DeclareMathOperator{\e}{e} 
\DeclareMathOperator{\diag}{diag}
\DeclareMathOperator{\rank}{rank}
\DeclareMathOperator{\innt}{int}
\DeclareMathOperator{\lin}{lineal}
\DeclareMathOperator{\rec}{rec}
\newcommand{\trans}{\mathsf{T}}
\newcommand{\pol}{\mathrm{pol}}
\newcommand{\irr}{core\xspace}
\newcommand{\EE}{\mathcal{E}}
\newcommand{\Ac}{\mathcal{A}_{k,\EE}}
\newcommand{\ST}{\mathcal{S}_{k,\EE}}
\newcommand{\CE}{\mathcal{C}_\EE}
\newcommand{\PE}{\mathcal{P}_{k,\EE}}
\newcommand{\la}{\lambda}
\newcommand{\Gkl}{G_k^{\la}}
\newcommand{\Vl}{{V^{\la}}}
\newcommand{\El}{{E^{\la}}}
\newcommand{\kl}{k^{\la}}
\newcommand{\Akl}{A_k^{\la}}
\newcommand{\Kkl}{K_k^{\la}}
\newcommand{\GEEl}{G_\EE^{\la}}
\newcommand{\EEl}{{\EE^{\la}}}
\newcommand{\IEEl}{I_\EE^{\la}}
\newcommand{\R}{{\mathbb R}}
\newcommand{\dd}[2]{\frac{\mathrm{d} #1}{\mathrm{d} #2}}
\theoremstyle{plain}
\newtheorem{theorem}{Theorem}
\newtheorem{proposition}[theorem]{Proposition}
\newtheorem{lemma}[theorem]{Lemma}
\newtheorem{fact}[theorem]{Fact}
\theoremstyle{definition}
\newtheorem{remark}[theorem]{Remark}
\newcommand\blue[1]{\textcolor{black}{#1}}
\def\blfootnote{\xdef\@thefnmark{}\@footnotetext}
\begin{document}

\title{A new decomposition of the graph Laplacian \\ and the binomial structure of mass-action systems}

\renewcommand{\thefootnote}{\fnsymbol{footnote}}

\author{%
Stefan M\"uller$$
}
\blfootnote{
\scriptsize

\noindent
{\bf S.~M\"uller} (\href{mailto:st.mueller@univie.ac.at}{st.mueller@univie.ac.at}),
Faculty of Mathematics, University of Vienna, Oskar-Morgenstern-Platz 1, 1090 Wien, Austria

}

\date{\today \\ \bigskip \bigskip 
\normalsize \em 
Dedicated to the memory of Friedrich J.\ M.\ Horn
on the occasion of the 50th anniversary of his foundational 1972 papers
on mass-action systems}

\maketitle

\begin{abstract}
We provide a new decomposition of the Laplacian matrix
(for labeled directed graphs \blue{with strongly connected components}),
involving an \blue{invertible {\em core matrix},}
the vector of tree constants,
and the incidence matrix of an auxiliary graph, representing an order on the vertices.
Depending on the particular order,
the core matrix has additional properties.
Our results are graph-theoretic/algebraic in nature.

As a first application,
we further clarify the binomial structure of (weakly reversible) mass-action systems,
arising from chemical reaction networks.
Second, we extend a classical result by Horn and Jackson
on the asymptotic stability of special steady states (complex-balanced equilibria).
\blue{Here,} the new decomposition of the graph Laplacian allows \blue{us} to consider
regions in the positive orthant with given {\em monomial evaluation orders}
(and corresponding polyhedral cones in logarithmic coordinates).
As it turns out, all dynamical systems are asymptotically stable that can be embedded in certain {\em binomial differential inclusions}.
In particular, this holds for complex-balanced mass-action systems,
and hence we also obtain a polyhedral-geometry proof of the classical result.

\medskip
{\bf Keywords:} labeled, directed graph; 
chemical reaction network; 
weak reversibility;
complex-balanced equilibrium;
asymptotic stability;
differential inclusion

\end{abstract}


\section{Introduction} \label{sec:intro}

The Laplacian matrix (or graph Laplacian) is a matrix representation of a graph.
It can be seen as a discrete version of the Laplace operator defined on graphs.
On the one hand,
the Laplacian matrix of an {\em undirected} graph, its spectrum, and its eigendecomposition
have a variety of applications ranging from organic chemistry to signal processing and machine learning~\cite{Mohar1991,Merris1994,Shuman2013,BelkinNiyogi2003}.
On the other hand,
{\em labeled, directed} graphs underlie dynamical systems
ranging from continuous-time Markov processes (linear stochastic models)~\cite{MirzaevGunawardena2013}
to mass-action systems (non-linear deterministic models of chemical reaction networks)~\cite{HornJackson1972}.

In the linear setting, the vertices $V$ of a simple digraph $G=(V,E)$ represent states, and the edges $E$ represent transitions.
Moreover, edge labels $k$ represent transition rate constants.
The dynamical system for a state variable $\psi$ is given by 
\begin{equation}
\dd{\psi}{t} = A_k \, \psi ,
\end{equation}
where $A_k$ is the Laplacian matrix of the labeled digraph $G_k=(V,E,k)$. 
That is, $(A_k)_{i,j} = k_{j \to i}$ if there is a transition $(j \to i) \in E$,
$(A_k)_{i,i} = -\sum_{(i \to j) \in E} k_{i \to j}$, and $(A_k)_{i,j}=0$ otherwise.
(As in chemical reaction network theory, we use the letter $A$ for the graph Laplacian
and indicate its dependence on the edge labels $k$ by a subscript.)
The linear system can be called ``Laplacian dynamics'',
it is equivalent to the stochastic master equation,
and it is studied in applications ranging from biochemistry to systems biology~\cite{Gunawardena2012,MirzaevGunawardena2013}.

\blue{
In the nonlinear setting,
the dynamical system for the species concentrations $x$ is given by 
\begin{equation} \label{eq:intro}
\dd{x}{t} = Y A_k \, x^Y .
\end{equation}
All notation is defined at the end of this introduction,
and mass-action systems are introduced in Section~\ref{sec:mas}.
Here, we motivate Eqn.~\eqref{eq:intro} in an informal way.
As an example, we consider the chemical reaction ${1 \mathsf{X}_1+ 1 \mathsf{X}_2 \to \mathsf{X}_3}$
with ``stoichiometric'' coefficients equal to 1.
Under the assumption of mass-action kinetics,
its rate is given by $v = k \, (x_1)^1 (x_2)^1$, 
where $k > 0$ is the rate constant, and $x_1, x_2 \ge 0$ are the concentrations of the species $\mathsf{X}_1,\mathsf{X}_2$.
More abstractly,
we can write the reaction as ${y \to y'}$ with (educt and product) ``complexes'' $y = (1,1,0,0,\ldots)^\trans$ and $y'=(0,0,1,0,\ldots)^\trans$,
and we can write its rate as $v = k \, x^y$ with the monomial $x^y := \prod_j (x_j)^{y_j}= (x_1)^1 (x_2)^1 (x_3)^0 (x_4)^0 \cdots$
in the species concentrations~$x=(x_1,x_2,x_3,x_4,\ldots)^\trans$.
In a network,
an individual reaction ${y \to y'}$ contributes the summand $k \, x^y \, (y' - y)$ to the dynamical system for $x$,
where the reaction vector $y'-y$ captures the consumption of educts $y$ and the formation of products $y'$.
For the example reaction, $x^y=x_1 x_2$ (as stated above) and $y'-y = (-1,-1,1,0,\ldots)^\trans$.}

\blue{
Now, we can introduce a mass-action system
as a simple digraph $G=(V,E)$, a map~$y$ (assigning complexes to vertices), and edge labels $k$.
In particular,
every edge ${(i \to i') \in E}$ defines a reaction ${y(i) \to y(i')}$ with rate constant $k_{i \to i'}$.
Hence, the associated dynamical system
$\dd{x}{t} = \sum_{(i \to i') \in E} k_{i \to i'} \, x^{y(i)} \, ( y(i')-y(i) )$
involves a sum over all edges, and every summand is a product of a reaction rate and a reaction vector.
Using the Laplacian matrix $A_k$,
the right-hand-side can be decomposed as shown in Eqn.~\eqref{eq:intro}.
The matrix $Y$ collects the complexes $y(i)$ for $i \in V$,
and the vector of monomials $x^Y$ is defined via $(x^Y)_i = x^{y(i)}$.}
Altogether, the dynamical system is polynomial.
It is determined by the  
\blue{complex matrix~$Y$ (by stoichiometry)}
as well as by the \blue{Laplacian matrix~$A_k$ (by the graph),}
and chemical reaction network theory studies the interplay of these two matrices
to understand dynamics and steady states of mass-action systems,
starting from the foundational 1972 papers~\cite{HornJackson1972,Horn1972,Feinberg1972} until today.

A steady state $x > 0$ with $A_k \, x^Y=0$ is called a positive {\em complex-balanced} equilibrium (CBE),
\blue{also known as vertex-balanced steady state.
Indeed, at a CBE, the sum of all ``flows'' $k_{i \to i'} \, x^{y(i)}$ {\em from} vertex $i$/complex $y(i)$
equals the sum of all $k_{i' \to i} \, x^{y(i')}$ {\em to} the latter.}
As shown by Horn~\cite{Horn1972} and Horn \& Jackson~\cite{HornJackson1972} in 1972,
the existence of a CBE has three important consequences:
\blue{the components of the graph are strongly connected (the network is ``weakly reversible'');}
all equilibria are complex-balanced and asymptotically stable;
and there is a unique equilibrium in every \blue{dynamically invariant affine subspace (``stoichiometric compatibility class'').}
More technically, complex-balanced equilibria are given by binomial equations and have a monomial parametrization.

For \blue{symmetric digraphs (``reversible'' networks),}
{\em detailed-balanced} equilibria are given by binomial equations \blue{(by definition).
Moreover,} the polynomial dynamical system is a sum of binomials. 
\blue{(Just note that every reversible reaction ${y \rightleftarrows y'}$ contributes the summand $(k_{y \to y'} \, x^y - k_{y' \to y} \, x^{y'})\, (y' - y)$ to the dynamical system for $x$.)
We show that this also holds for} weakly reversible networks. 
\blue{To this end,} we \blue{provide} a new decomposition of the graph Laplacian, 
involving an invertible \blue{{\em \irr matrix}}, based on an order on the vertices.
Further, we extend the classical result by Horn and Jackson
on the asymptotic stability of complex-balanced equilibria.
In addition to a Lyapunov function (as in classical proofs),
we consider regions in the positive orthant with given {\em monomial evaluation orders}
(and corresponding polyhedral cones in logarithmic coordinates). 
As it turns out, all dynamical systems are asymptotically stable that can be embedded in certain {\em binomial differential inclusions}.
In particular, this holds for complex-balanced mass-action systems,
and hence we also obtain a polyhedral-geometry proof of the classical result.


{\bf Organization of the work.}
In Section~\ref{sec:ilm}, we provide a new decomposition of the graph Laplacian
\blue{(for labeled directed graphs \blue{with strongly connected components}),}
involving an invertible \blue{{\em \irr matrix}}, based on an order on the vertices.
Depending on the particular order,
the \irr matrix has additional properties.

In Section~\ref{sec:mas}, we apply the graph-theoretic/algebraic results to mass-action systems.
In Subsection~\ref{sec:bin}, we demonstrate their binomial structure, 
and in~\ref{sec:mon}, we introduce {\em monomial evaluation orders} 
and corresponding geometric objects (polyhedra and polyhedral cones).
In \blue{Subsection}~\ref{sec:bdi}, we embed complex-balanced mass-action systems in {\em binomial differential inclusions}
and show that all equilibria of the latter are asymptotically stable,
\blue{and in~\ref{sec:dis}, we discuss our results.}

In Appendix~\ref{app:formulas}, we provide explicit formulas for the vector of tree constants
and the Laplacian matrix, using cycle decomposition.
In Appendix~\ref{app:aux}, we state auxiliary results used in the new decomposition of the graph Laplacian.
In Appendix~\ref{app:stab}, we give another proof of the asymptotic stability of complex-balanced equilibria
(and the non-existence of other steady states) without using differential inclusions.

{\bf Notation.}
We denote the positive real numbers by $\R_>$ and the nonnegative real numbers by $\R_\ge$.
Throughout the work, we use index notation: for a finite index set $I$,
we write $\R^I$ for the real vector space of vectors $x=(x_i)_{i \in I}$ with $x_i \in \R$,
and analogously we write $\R^I_\ge$ and $\R^I_>$.
(For $I=\{1,\ldots,n\}$, we have the standard case $\R^I=\R^n$.)
We write $x>0$ for $x \in \R^I_>$ and $x \ge 0$ for $x \in \R^I_\ge$.

For vectors $x,y \in \R^I$, we denote their scalar product by $x \cdot y \in \R$ and 
their componentwise (Hadamard) product by $x \circ y \in \R^I$.
For $x \in \R^I_>, \, y \in \R^I$, we define \blue{the (generalized) monomial} $x^y = \prod_{i \in I} (x_i)^{y_i} \in \R_>$,
\blue{and for $x \in \R^I_>, \, Y \in \R^{I \times J}$, 
we define the vector of monomials $x^Y \in \R^J_>$ via $(x^Y)_j =x^{y(j)}$,
where $y(j)$ is the column of $Y$ with index $j \in J$.}



\section{The graph Laplacian} \label{sec:ilm}

In the following, we assume that the components of a digraph are strongly connected.
For the simplicity of the presentation,
we first consider one strongly connected component separately. 


\subsection*{One component}

We consider a strongly connected, simple, directed graph $G = (V,E)$ with a finite set of vertices $V = \{1, \ldots, m \}$
and a set of edges $E \subseteq V \times V$.
Further, we consider positive edge labels $k \in \R^E_>$ and the resulting labeled digraph $G_k=(V,E,k)$.
Its Laplacian matrix $A_k \in \R^{V \times V}$ is given by
\[
A_k = I_E \diag(k) I_{E,s}^\trans ,
\]
where $I_E \in \R^{V \times E}$ is the incidence matrix
and $I_{E,s} \in \R^{V \times E}$ is the ``source matrix''.
Explicitly,
\[
(A_k)_{i,j} =
\begin{cases}
k_{j \to i} , & \text{if } (j \to i) \in E , \\
-\sum_{(i \to i') \in E} k_{i \to i'} , & \text{if } i = j, \\
0 , & \text{otherwise,}
\end{cases}
\]
\[
(I_E)_{i,(j \to j')} =
\begin{cases}
-1 , & \text{if } i = j , \\
1 , & \text{if } i = j' , \\
0 , & \text{otherwise,}
\end{cases}
\]
and
\[
(I_{E,s})_{i,(j \to j')} =
\begin{cases}
1 , & \text{if } i = j , \\
0 , & \text{otherwise.}
\end{cases}
\]
This definition is used in dynamical systems.
For example,
$k$ is the vector of transition rate constants in the continuous-time, linear process $\dd{\psi}{t} = A_k \, \psi$ 
(with $\psi \in \R^V_\ge$ and $\sum_{i \in V} \psi_i=1$). 
In other fields, the Laplacian matrix is defined as $A_k^\trans$, $-A_k$, or $-A_k^\trans$.

Since $G$ is connected, $\ker I_E^\trans = \im \bar 1$, where $\bar 1 \in \R^V$ is the vector with all entries equal to one.
Further, $\ker I_{E,s}^\trans = \{0\}$.
Most importantly,
since $G$ is strongly connected,
\blue{
\begin{equation} \label{eq:Kk}
\ker A_k = \im K_k
\end{equation}
}%
with a positive vector $K_k \in \R^V_>$ (depending on the rate constants).
The entries of $K_k$ (the {\em tree constants}) can be given explicitly in terms of $k$,
\[
(K_k)_i = \sum_{(V, E') \in T_i} \; \prod_{(j \to j') \in E'} k_{j \to j'} , \quad i \in V ,
\]
where $T_i$ is the set of directed spanning trees of $G$ rooted at vertex $i \in V$ 
(and directed towards the root).
\blue{For a minimal proof of Eqn.~\eqref{eq:Kk}, see \cite[Lemma~1]{Kandori1993} or Appendix~\ref{app:formulas}.}
We note that the explicit formula is not crucial for our analysis.
Finally, the tree constants $K_k$ correspond to minors of the matrix $-A_k$
which is the content of the matrix-tree theorem (for labeled, directed graphs)~\cite[Theorem~3.6]{Tutte1948}.

Clearly, the matrix
\[
- A_k \diag(K_k) \in \R^{V \times V}
\]
has positive diagonal entries and nonpositive off-diagonal entries.
Most importantly, it has zero row and column sums:
Indeed, $\bar 1^\trans A_k \diag(K_k) = 0$, and also $A_k \diag(K_k) \, \bar 1 = A_k \, K_k = 0$.
As a consequence, the matrix is diagonally dominant.

The entries of $A_k \diag(K_k)$ can be given explicitly in terms of $k$.
For a derivation of this formula and a discussion of the Birkhoff/von Neumann Theorem~\cite{Birkhoff1946,vonNeumann1953}, see Appendix~\ref{app:formulas}.
Again, we note that the explicit formula is not crucial for our analysis.

{\bf Example.}
Throughout this section, we consider the labeled directed graph $G_k=(V,E,k)$ with $V=\{1,2,3\}$ and $E=\{1 \to 2, 2 \to 1, 2 \to 3, 3 \to 1\}$,
that is,
\[
\begin{tikzcd}[ampersand replacement=\&]
1 \arrow[r,"k_{12}",yshift=1ex] \& 2 \arrow[l,"k_{21}"] \arrow[ld,"k_{23}"] \\
3 \arrow[u,"k_{31}"]
\end{tikzcd}
\]
with
\[
A_k = 
\begin{pmatrix} 
-k_{12} & k_{21} & k_{31} \\
k_{12} & -k_{21} -k_{23} & 0 \\
0 & k_{23} & -k_{31}
\end{pmatrix} ,
\quad
K_k = 
\begin{pmatrix} 
k_{23} k_{31} + k_{21} k_{31} \\
k_{31} k_{12} \\
k_{12} k_{23}
\end{pmatrix} ,
\]
and
\[
A_k \diag(K_k) = k_{12} k_{23} k_{31}
\begin{pmatrix} 
-1 & 0 & 1 \\
1 & -1 & 0 \\
0 & 1 & -1
\end{pmatrix}
+ k_{12} k_{21} k_{31}
\begin{pmatrix} 
-1 & 1 & 0 \\
1 & -1 & 0 \\
0 & 0 & 0
\end{pmatrix} ,
\]
see also Appendix~\ref{app:formulas} for the cycle decomposition of $A_k \diag(K_k)$.
\hfill $\blacksquare$

Most importantly,
we introduce an {\em auxiliary} connected directed graph
$
G_\EE = (V,\EE)
$
with the same set of vertices $V$ as in $G=(V,E)$, 
but with an arbitrary set of edges $\EE \subseteq V \times V$
such that $|\EE| = |V| -1$.
That is, $G_\EE$ is a directed tree.
\blue{%
In particular, it has no cycles.
Further, $G_\EE$ need not be}
a subgraph of $G$ \blue{nor be} directed towards a root.
The corresponding incidence matrix $I_\EE \in \R^{V \times \EE}$ is given by
\[
(I_\EE)_{i,(j \to j')} =
\begin{cases}
-1 , & \text{if } i = j , \\
1 , & \text{if } i = j' , \\
0 , & \text{otherwise.}
\end{cases}
\]
Note that the definitions of the incidence matrices $I_E$ and $I_\EE$ agree formally.
(Just the sets of edges $E$ and $\EE$ differ.)
Clearly, $\ker I_\EE = \{ 0 \}$ and $\ker I_\EE^\trans = \im \bar 1$.



\begin{proposition} \label{pro1:Ac}
Let $G_k=(V,E,k)$ be a strongly connected, labeled, simple digraph
and $G_\EE = (V,\EE)$ be an auxiliary digraph. 
Then, there exists a \blue{unique} invertible matrix $\Ac \in \R^{\EE \times \EE}$, called the \blue{{\em \irr matrix} of the graph Laplacian}, such that
\[
A_k \diag(K_k) = - I_\EE \Ac I_\EE^\trans .
\]
\end{proposition}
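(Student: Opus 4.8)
The plan is to work with the column space picture. Since $G_\EE$ is a tree on $V$, we have $\ker I_\EE = \{0\}$ and $\ker I_\EE^\trans = \im \bar 1$, so $I_\EE$ has full column rank $|V|-1 = |\EE|$ and $\im I_\EE = (\im \bar 1)^\perp = \{ v \in \R^V : \bar 1^\trans v = 0\}$. First I would observe that the matrix $M := -A_k \diag(K_k)$ has both zero row sums and zero column sums (as already noted in the text: $A_k K_k = 0$ gives $M \bar 1 = 0$, and $\bar 1^\trans A_k = 0$ gives $\bar 1^\trans M = 0$). Hence $\im M \subseteq \im I_\EE$ and also $\ker M \supseteq \im \bar 1 = \ker I_\EE^\trans$, i.e. $M$ ``factors through'' $I_\EE$ on both sides.

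For existence, since $I_\EE$ has a left inverse $L \in \R^{\EE \times V}$ (e.g. $L = (I_\EE^\trans I_\EE)^{-1} I_\EE^\trans$), I would simply set $\Ac := L M L^\trans$ and check that $I_\EE \Ac I_\EE^\trans = (I_\EE L)\, M \,(I_\EE L)^\trans = P M P^\trans = M$, where $P = I_\EE L$ is the orthogonal projection onto $\im I_\EE$; the last equality uses $M \bar 1 = 0$ and $\bar 1^\trans M = 0$, so that $P$ acts as the identity on the relevant subspaces. For uniqueness, suppose $I_\EE \Ac I_\EE^\trans = I_\EE \Ac' I_\EE^\trans$; applying the left inverse $L$ on the left and $L^\trans$ on the right yields $\Ac = \Ac'$ (equivalently, $I_\EE B I_\EE^\trans = 0$ with $I_\EE$ injective and $I_\EE^\trans$ surjective onto $\R^\EE$ forces $B = 0$).

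The remaining, and only substantive, point is invertibility of $\Ac$. Here I would use the rank count: $\rank M = |V| - 1$. This follows because $G$ is strongly connected, so $\ker A_k = \im K_k$ is one-dimensional (Eqn.~\eqref{eq:Kk}), hence $\rank A_k = |V|-1$, and $\diag(K_k)$ is invertible (as $K_k > 0$), so $\rank M = \rank A_k = |V|-1$. Since $M = I_\EE \Ac I_\EE^\trans$ with $I_\EE$ and $I_\EE^\trans$ both of rank $|V|-1 = |\EE|$, we get $\rank \Ac \ge \rank M = |\EE|$, so $\Ac \in \R^{\EE \times \EE}$ is invertible. I expect this rank bookkeeping — making sure the dimensions match up so that no rank is lost in the triple product — to be the crux; everything else is formal linear algebra about the left inverse of a full-column-rank incidence matrix. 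One should also note for later use (the "additional properties" depending on the order) that $\Ac$ inherits structure from $M$: since $M$ is diagonally dominant with positive diagonal and nonpositive off-diagonal entries and has zero row/column sums, the conjugated matrix $\Ac = L M L^\trans$ will reflect this once $\EE$ is chosen to encode a specific order on $V$, but that refinement belongs to the subsequent development rather than to this proposition.
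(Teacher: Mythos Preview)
Your argument is correct and somewhat more direct than the paper's. The paper proceeds in two asymmetric steps: first it uses $\ker(A_k\diag(K_k))=\im\bar 1$ to factor the transpose as $\diag(K_k)A_k^\trans = I_\EE B_{k,\EE}^\trans$ for a unique $B_{k,\EE}\in\R^{V\times\EE}$, and then, to factor $B_{k,\EE}$ itself through $I_\EE$, it invokes the graph-theoretic identities $\im A_k=\im I_E$ (Lemma~\ref{lem:aux1}) and $\im I_E=\im I_\EE$ (Lemma~\ref{lem:aux2}) to conclude $\im B_{k,\EE}=\im I_\EE$ and hence $B_{k,\EE}=-I_\EE\Ac$. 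You bypass both auxiliary lemmas by exploiting symmetry: since $M=-A_k\diag(K_k)$ has zero row \emph{and} column sums, both $\im M$ and $\im M^\trans$ lie in $\im I_\EE=(\im\bar 1)^\perp$, so the projection sandwich $PMP^\trans=M$ goes through in one stroke, and the explicit formula $\Ac=LML^\trans$ drops out. What your route buys is economy (no appeal to $\im A_k=\im I_E$); what the paper's route buys is that the intermediate object $B_{k,\EE}$ and the formula $\Ac=-J_\EE\,A_k\diag(K_k)\,J_\EE^\trans$ for an arbitrary left inverse $J_\EE$ are set up explicitly for the subsequent Propositions~\ref{pro2:Ac} and~\ref{pro3:Ac}, where the combinatorial left inverses $J_\EE$ (not the Moore--Penrose one) are used to read off sign and dominance properties. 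Your closing remark anticipates this correctly.
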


\begin{proof}
Since $G$ is strongly connected, 
\[
\ker \left( A_k \diag(K_k) \right) = \im \bar 1 .
\]
Hence,
\[
\im \left( \diag(K_k) A_k^\trans \right) = \ker \bar 1^\trans = \im I_\EE
\]
and
\[
\diag(K_k) A_k^\trans = I_\EE B_{k,\EE}^\trans
\]
for a unique matrix $B_{k,\EE} \in \R^{V \times \EE}$,
where uniqueness follows from $\ker I_\EE = \{ 0 \}$.
For the same reason,
we have
\[
\ker A_k^\trans = \ker B_{k,\EE}^\trans
\]
and hence
\[
\im B_{k,\EE} = \im A_k .
\]
Since $G$ is strongly connected,
\[
\im A_k = \im I_E ,
\]
\blue{cf.~Lemma~\ref{lem:aux1} in Appendix~\ref{app:aux},}
and further
\[
\im I_E =  \im I_\EE ,
\]
\blue{cf.~Lemma~\ref{lem:aux2} in Appendix~\ref{app:aux}.}
Altogether, we have
\[
\im B_{k,\EE} = \im I_\EE
\]
and hence
\[
B_{k,\EE} = - I_\EE \Ac
\]
for a unique matrix $\Ac \in \R^{\EE \times \EE}$.
(The minus sign ensures positive diagonal entries of $\Ac$ for particular auxiliary graphs; see below.)
Since $\rank(B_{k,\EE})=\rank(I_\EE)=|\EE|$, we have $\ker(\Ac) = \ker(B_{k,\EE}) = \{0\}$,
that is, $\Ac$ is invertible.
Finally, we obtain
\[
A_k \diag(K_k) = B_{k,\EE} I_\EE^\trans = - I_\EE \Ac I_\EE^\trans .
\]
\end{proof}

For an auxiliary digraph $G_\EE=(V,\EE)$, we just required $|\EE| = |V| -1$.
In the following two results, we assume $G_\EE$ to be either of the form $i_1 \to i_2 \to \ldots \to i_m$
(a {\em chain graph}) or of the form $i_1 \to i_m$, $i_2 \to i_m$, \ldots, $i_{m-1} \to i_m$
(a {\em star graph} with \blue{root} $i_m$).

\begin{proposition} \label{pro2:Ac}
Let $G_k=(V,E,k)$ be a strongly connected, labeled, simple digraph,
and let $G_\EE = (V,\EE)$ be an auxiliary digraph that is a chain graph.
Then $\Ac \in \R^{\EE \times \EE}$, the \blue{\irr matrix of the graph Laplacian}, is non-negative with positive diagonal.
\end{proposition}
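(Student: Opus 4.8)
The plan is to exhibit an explicit left inverse of the incidence matrix $I_\EE$ of the chain graph, use it to obtain a closed formula for $\Ac$ in terms of the matrix $L := -A_k \diag(K_k)$, and then read off the sign pattern of $\Ac$ from that of $L$. Recall from the discussion before Proposition~\ref{pro1:Ac} that $L$ has positive diagonal entries and nonpositive off-diagonal entries, that its rows and columns sum to zero, and (writing out $L = -A_k\diag(K_k)$) that $L_{i,j} = -k_{j \to i}\,(K_k)_j < 0$ precisely when $(j \to i) \in E$ with $i \ne j$.

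Write $\EE = \{i_1 \to i_2,\, i_2 \to i_3,\, \ldots,\, i_{m-1}\to i_m\}$, and for $\ell = 1,\ldots,m-1$ let $S_\ell := \{i_1,\ldots,i_\ell\}$ be the vertex set cut out by deleting the $\ell$-th edge, with indicator vector $w_\ell \in \R^V$; collect the $w_\ell$ as columns of a matrix $W \in \R^{V\times\EE}$. A one-line computation gives $\bigl(I_\EE^\trans w_\ell\bigr)_{i_j \to i_{j+1}} = (w_\ell)_{i_{j+1}} - (w_\ell)_{i_j} = -\delta_{j\ell}$, i.e.\ $I_\EE^\trans W = W^\trans I_\EE = -\mathrm{Id}_\EE$. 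Multiplying the identity $L = I_\EE \Ac I_\EE^\trans$ (the content of Proposition~\ref{pro1:Ac}) by $W^\trans$ on the left and $W$ on the right then yields $\Ac = W^\trans L\,W$, so that $(\Ac)_{\ell,\ell'} = \sum_{p \in S_\ell}\sum_{q \in S_{\ell'}} L_{p,q}$.

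Now I would exploit the nestedness of the cuts together with the zero row/column sums of $L$. For $\ell \le \ell'$ we have $S_\ell \subseteq S_{\ell'}$; since the rows of $L$ sum to zero, $\sum_{q \in S_{\ell'}} L_{p,q} = -\sum_{q \notin S_{\ell'}} L_{p,q} \ge 0$ for each $p \in S_\ell$ (every such $L_{p,q}$ is an off-diagonal, hence nonpositive, entry), and summing over $p \in S_\ell$ gives $(\Ac)_{\ell,\ell'} \ge 0$. The case $\ell' \le \ell$ is the same argument with the roles of rows and columns exchanged, using that the columns of $L$ sum to zero. For the diagonal, $(\Ac)_{\ell,\ell} = \sum_{p \in S_\ell}\sum_{q \notin S_\ell} (-L_{p,q})$ is a sum of nonnegative terms; strong connectivity of $G$ guarantees at least one edge $(q_0 \to p_0) \in E$ with $q_0 \notin S_\ell$ and $p_0 \in S_\ell$ (a directed path from any vertex outside $S_\ell$ to any vertex inside must cross the cut), and for that edge $-L_{p_0,q_0} = k_{q_0 \to p_0}(K_k)_{q_0} > 0$, whence $(\Ac)_{\ell,\ell} > 0$.

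I do not expect a genuine obstacle: the one idea that makes the argument work is recognizing that a left inverse of $I_\EE$ for a chain graph is (minus) the matrix of indicator vectors of the fundamental cuts, which converts $\Ac$ into the ``cut-by-cut'' matrix $W^\trans L\,W$; everything else is telescoping with the zero-sum property of $L$ plus one appeal to strong connectivity. The point to be careful about is keeping the cases $\ell \le \ell'$ and $\ell' \le \ell$ separate, since $\Ac$ (like $L$) is not symmetric in general.
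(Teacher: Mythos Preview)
Your proof is correct and is essentially identical to the paper's: your matrix $W^\trans$ coincides with the paper's left inverse $J_\EE$ (defined entrywise by the same condition $j\le i$ along the chain), yielding the same formula $\Ac = J_\EE\,(-A_k\diag(K_k))\,J_\EE^\trans$, and both arguments deduce the sign pattern from the zero row/column sums of $-A_k\diag(K_k)$ together with strong connectivity. Your write-up is slightly more explicit than the paper's in separating the cases $\ell\le\ell'$ and $\ell'\le\ell$ and in identifying the specific strictly positive summand on the diagonal, but the underlying idea is the same.
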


\begin{proof}
Let $G_\EE = (V,\EE)$ be the chain graph
\[
i_1 \to i_2 \to \ldots \to i_m .
\]
It induces a natural order on the set of vertices~$V$ (and on the set of edges~$\EE$).
For $i,j \in V$, we write $i \le j$ if $i = j$ or $i \to \ldots \to j$. 
An ``inverse'' of the incidence matrix $I_\EE \in \R^{V \times \EE}$ is given by $J_\EE \in \R^{\EE \times V}$ with
\[
(J_\EE)_{(i \to i'),j} =
\begin{cases}
1 , & \text{if } j \le i , \\
0 , & \text{otherwise} .
\end{cases}
\]
Explicitly, using the order $i_1, \, i_2, \, \ldots, i_m$ on $V$,
\[
J_\EE = 
\begin{pmatrix}
1 & 0 & 0 & \cdots & 0 & 0 \\
1 & 1 & 0 & \ddots & 0 & 0 \\
\vdots & \vdots & \ddots & \ddots & \vdots & \vdots \\
1 & 1 & 1 & \ddots & 0 & 0 \\
1 & 1 & 1 & \cdots & 1 & 0
\end{pmatrix}
,
\quad
I_\EE = 
\begin{pmatrix}
-1 & 0 & \cdots & 0 & 0 \\
1 & -1 & \ddots & \vdots & \vdots \\
0 & 1 & \ddots & 0 & \vdots \\
\vdots & 0 & \ddots & -1 & 0 \\
\vdots & \vdots & \ddots& 1 & -1 \\
0  & 0 & \cdots & 0 & 1
\end{pmatrix}
,
\]
and indeed, $J_\EE I_\EE = - \mathrm{I}$,
where $\mathrm{I} \in \R^{\EE \times \EE}$ is the identity matrix.
\blue{That is, $-J_\EE$ is a generalized left-inverse of $I_\EE$.}
Hence, by Proposition~\ref{pro1:Ac},
\[
\Ac = - J_\EE A_k \diag(K_k) J_\EE^\trans .
\]
For an arbitrary matrix $A \in \R^{V \times V}$, 
\begin{equation*} \label{eq:sumC} \tag{$\sigma$}
(J_\EE A \, J_\EE^\trans)_{i \to i', j \to j'} = \sum_{\bar i \colon \bar i \le i} \; \sum_{\bar j \colon \bar j \le j} A_{\bar i,\bar j} .
\end{equation*}
Explicitly,
\eqref{eq:sumC} is the sum of all entries in the upper left $i \times j$ block of~$A$.
Now, recall that the matrix $A = -A_k \diag(K_k)$ 
has positive diagonal entries and nonpositive off-diagonal as well as zero row and column sums.
Hence, the sum \eqref{eq:sumC} is nonnegative.
Finally, recall that the underlying graph $G$ is strongly connected.
If $i \to i'$ equals $j \to j'$, then the sum \eqref{eq:sumC} is positive, 
since the corresponding subgraph with vertices $\{i_1, i_2, \ldots, i\}$ has incoming and outgoing edges.
\end{proof}

{\bf Example (continued).} 
In the labeled digraph $G_k=(V,E,k)$ introduced above, there are 3 vertices and hence 6 possible {\em chain graphs}.
For example, for $\EE = \{ 1 \to 2, 2 \to 3 \}$ (contained in $E$), we find
\[
\Ac = - J_\EE A_k \diag(K_k) J_\EE^\trans = 
k_{12} k_{23} k_{31}
\blue{
\begin{pmatrix} 
1 & 1 \\
0 & 1
\end{pmatrix}}
+ k_{12} k_{21} k_{31}
\begin{pmatrix} 
1 & 0 \\
0 & 0
\end{pmatrix} ,
\]
whereas for $\EE = \{ 1 \to 3, 3 \to 2\}$ (both edges not contained in $E$), we find
\[
\Ac = k_{12} k_{23} k_{31}
\blue{
\begin{pmatrix} 
1 & 0 \\
1 & 1
\end{pmatrix}}
+ k_{12} k_{21} k_{31}
\begin{pmatrix} 
1 & 1 \\
1 & 1
\end{pmatrix} .
\]

\begin{proposition} \label{pro3:Ac}
Let $G_k=(V,E,k)$ be a strongly connected, labeled, simple digraph,
and let $G_\EE = (V,\EE)$ be an auxiliary digraph that is a star graph.
Then $\Ac \in \R^{\EE \times \EE}$, the \blue{\irr matrix of the graph Laplacian}, is
\blue{(row and column)} diagonally dominant with positive diagonal and non-positive off-diagonal entries.

Explicitly, let $G_\EE = (V,\EE)$ have root $i_m \in V$.
Then $\Ac \in \R^{\EE \times \EE}$ equals $-A_k \diag(K_k) \in \R^{V \times V}$ with row $i_m$ and column $i_m$ removed
\blue{and} edges ${(i \to i_m) \in \EE}$ \blue{identified} with vertices $i \in V \setminus \{i_m\}$.

\end{proposition}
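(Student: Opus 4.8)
The plan is to mimic the proof of Proposition~\ref{pro2:Ac}: choose a concrete description of $I_\EE$ for the star graph, exhibit a matrix $J_\EE$ with $J_\EE I_\EE = -\mathrm{I}$, and then read off $\Ac$ from Proposition~\ref{pro1:Ac}. Concretely, order the vertices so that the root comes last, $i_1, i_2, \ldots, i_{m-1}, i_m$, and order the edges of $\EE$ as $(i_1 \to i_m), (i_2 \to i_m), \ldots, (i_{m-1} \to i_m)$. Then $I_\EE \in \R^{V \times \EE}$ is the block column consisting of $-\mathrm{I}$ stacked over $\bar 1^\trans$, and the natural candidate is $J_\EE = (\,\mathrm{I}\ \ 0\,) \in \R^{\EE \times V}$, the matrix sending the edge $(i \to i_m)$ to the vertex $i$ and the root $i_m$ to $0$. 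A one-line check gives $J_\EE I_\EE = -\mathrm{I}$, so conjugating the identity $A_k \diag(K_k) = - I_\EE \Ac I_\EE^\trans$ of Proposition~\ref{pro1:Ac} by $J_\EE$ yields $\Ac = - J_\EE A_k \diag(K_k) J_\EE^\trans$, exactly as in the chain-graph case; invertibility of $\Ac$ is already part of Proposition~\ref{pro1:Ac} and needs no further argument.

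Next I would observe what conjugation by $J_\EE$ does to a matrix $A \in \R^{V \times V}$: since $J_\EE$ simply selects the coordinates indexed by $V \setminus \{i_m\}$, the product $J_\EE A J_\EE^\trans$ is precisely the principal submatrix of $A$ obtained by deleting row $i_m$ and column $i_m$, with the surviving index $i$ relabelled as the edge $(i \to i_m)$. Applying this with $A = -A_k\diag(K_k)$ gives the explicit description in the statement: $\Ac$ is $-A_k\diag(K_k)$ with row $i_m$ and column $i_m$ struck out.

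It then remains to transport the sign pattern. Recall from the discussion preceding Proposition~\ref{pro1:Ac} that $-A_k\diag(K_k)$ has positive diagonal (here one uses that strong connectedness forces $(A_k)_{ii} < 0$, together with $(K_k)_i > 0$), nonpositive off-diagonal entries, and zero row and column sums. Deleting a row and column preserves the remaining diagonal entries and the off-diagonal sign, so $\Ac$ still has positive diagonal and nonpositive off-diagonal entries. For row diagonal dominance, fix $i \in V \setminus \{i_m\}$: the zero-row-sum property gives $(-A_k\diag(K_k))_{ii} = \sum_{j \ne i} \bigl| (-A_k\diag(K_k))_{ij} \bigr|$, and erasing column $i_m$ only drops the nonnegative term $\bigl| (-A_k\diag(K_k))_{i,i_m} \bigr|$ from the right-hand side, so $(\Ac)_{ii} \ge \sum_{j \ne i} |(\Ac)_{ij}|$; column dominance follows identically from the zero-column-sum property.

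The only point that requires care — the ``main obstacle'', such as it is — is the bookkeeping in this last step: one must keep track of the fact that removing a column can only decrease the sum of absolute values of the off-diagonal entries of a row while leaving the diagonal entry untouched, which is exactly what lets diagonal dominance survive the restriction to $V \setminus \{i_m\}$. Everything else is the same linear-algebra manipulation already carried out for the chain graph.
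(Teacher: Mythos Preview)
Your proof is correct and follows the same overall strategy as the paper---choose a generalized left inverse $J_\EE$ of $I_\EE$ and read off $\Ac = -J_\EE A_k \diag(K_k) J_\EE^\trans$ from Proposition~\ref{pro1:Ac}---but your specific choice of $J_\EE$ differs and is in fact more direct. The paper takes $(J_\EE)_{(i\to i_m),j} = 0$ if $j=i$ and $1$ otherwise (so that $J_\EE I_\EE = +\mathrm{I}$), which yields $(J_\EE A J_\EE^\trans)_{i\to i_m,\,j\to i_m}$ as the sum of all entries of $A$ except those in row $i$ and column $j$; it then invokes the zero row and column sums of $A = -A_k\diag(K_k)$ to collapse this sum, via an inclusion--exclusion argument, to the single entry $A_{i,j}$. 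Your projection matrix $J_\EE = (\mathrm{I}\;0)$ bypasses that step entirely: $J_\EE A J_\EE^\trans$ is manifestly the principal submatrix of $A$ obtained by deleting row and column $i_m$, so the explicit description of $\Ac$ is immediate and the zero-sum property is needed only for the diagonal-dominance conclusion. The paper's route keeps a closer formal parallel with the chain-graph proof, but for the star graph your argument is the more economical one.
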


\begin{proof}
Let $G_\EE = (V,\EE)$ be the star graph
\[
i_1 \to i_m, \, i_2 \to i_m, \, \ldots, \, i_{m-1} \to i_m .
\]
An ``inverse'' of the incidence matrix $I_\EE \in \R^{V \times \EE}$ is given by $J_\EE \in \R^{\EE \times V}$ with
\[
(J_\EE)_{(i \to i'),j} =
\begin{cases}
0 , & \text{if } j=i , \\
1 , & \text{otherwise} .
\end{cases}
\]
Explicitly, using the order $i_1, \, i_2, \, \ldots, i_m$ on $V$,
\[
J_\EE = 
\begin{pmatrix}
0 & 1 & \cdots & \cdots & 1 & 1 \\
1 & 0 & 1 & \cdots & 1 & 1 \\
\vdots & \ddots & \ddots & \ddots & \vdots & \vdots \\
1 & \cdots & 1 & 0 & 1 & 1 \\
1 & \cdots & \cdots & 1 & 0 & 1 
\end{pmatrix}
,
\quad
I_\EE = 
\begin{pmatrix}
-1 & 0 & \cdots & 0 & 0 \\
0 & -1 & \ddots & \vdots & \vdots \\
\vdots & 0 & \ddots & 0 & \vdots \\
\vdots & \vdots & \ddots& 1 & 0 \\
0  & 0 & \cdots & 0 & -1 \\
1 & 1 & \cdots & 1 & 1
\end{pmatrix}
,
\]
and indeed, $J_\EE I_\EE = \mathrm{I}^{\EE \times \EE}$.
\blue{That is, $J_\EE$ is a generalized left-inverse of $I_\EE$.}
Hence, by Proposition~\ref{pro1:Ac},
\[
\Ac = - J_\EE A_k \diag(K_k) J_\EE^\trans .
\]
For an arbitrary matrix $A \in \R^{V \times V}$, 
\begin{equation*} \label{eq:sumS} \tag{$\sigma^\star$}
(J_\EE A \, J_\EE^\trans)_{i \to i_m,j \to i_m} = \sum_{\bar i \colon \bar i \neq i} \; \sum_{\bar j \colon \bar j \neq j} A_{\bar i,\bar j} .
\end{equation*}
That is,
\eqref{eq:sumS} is the sum of all entries of $A$ except the entries in row~$i$ and column~$j$.
Now, recall that the matrix $A = -A_k \diag(K_k)$ 
has 
\blue{zero row and column sums.
Hence, \eqref{eq:sumS} equals the sum of all entries (which is zero)
{\em minus} the sums of all entries in row $i$ and column $j$ (which are zero)
{\em plus} the common entry of row $i$ and column $j$.
That is,
\[
(\Ac)_{i \to i_m,j \to i_m} =
- (J_\EE A_k \diag(K_k) \, J_\EE^\trans)_{i \to i_m,j \to i_m} = - (A_k)_{i,j} (K_k)_{j} .
\]}%
As claimed, $\Ac$ equals $\blue{A =} -A_k \diag(K_k)$ with row $i_m$ and column $i_m$ removed.
\blue{Like $A$,} it has positive diagonal entries and nonpositive off-diagonal entries
and is \blue{(row and column)} diagonally dominant.
(However, not all row and column sums are zero.)
\end{proof}

{\bf Example (continued).} 
In the labeled digraph $G_k=(V,E,k)$ introduced above, there are 3 vertices and hence 3 possible {\em star graphs}.
For example, for $\EE = \{ 2 \to 1 , 3 \to 1 \}$ (contained in $E$), we find
\[
\Ac = - J_\EE A_k \diag(K_k) J_\EE^\trans = 
k_{12} k_{23} k_{31}
\begin{pmatrix} 
1 & 0 \\
-1 & 1
\end{pmatrix}
+ k_{12} k_{21} k_{31}
\begin{pmatrix} 
1 & 0 \\
0 & 0
\end{pmatrix} ,
\]
whereas for $\EE = \{ 1 \to 3, 2 \to 3 \}$ (first edge not contained in $E$), we find
\[
\Ac = k_{12} k_{23} k_{31}
\begin{pmatrix} 
1 & 0 \\
-1 & 1
\end{pmatrix}
+ k_{12} k_{21} k_{31}
\begin{pmatrix} 
1 & -1 \\
-1 & 1
\end{pmatrix} .
\]

\blue{
{\bf Remark.}
In applications to mass-action systems in Section~\ref{sec:mas}, we use chain graphs (rather than star graphs).}


\subsection*{Several components}

In general,
we consider a labeled, simple digraph $G_k=(V,E,k)$ with $\ell$ strongly connected components $\Gkl = (\Vl, \El, \kl)$,
$\la=1,\ldots,\ell$,
finite sets of vertices $\Vl$, sets of edges $\El \subseteq \Vl \times \Vl$, and positive edge labels $\kl \in \R^{\El}_>$.
The corresponding Laplacian matrix $A_k \in \R^{V \times V}$ is block-diagonal with blocks
$\Akl \in \R^{\Vl \times \Vl}$,
and the vector of tree constants $K_k \in \R^V_>$ has blocks $\Kkl \in \R^{\Vl}_>$.
\blue{
Explicitly,
\[
A_k = \begin{pmatrix} A_k^1 & & 0 \\ & \ddots & \\ 0 & & A_k^\ell \end{pmatrix} \in \R^{V \times V}
\quad\text{and}\quad
K_k = \begin{pmatrix} K_k^1 \\ \vdots \\ K_k^\ell \end{pmatrix} \in \R^V_> .
\]
}

Accordingly,
an auxiliary digraph $G_\EE = (V,\EE)$ has $\ell$ connected components $\GEEl = (\Vl, \EEl)$ with $\EEl \subseteq \Vl \times \Vl$ and $|\EEl| = |\Vl| -1$. 
The corresponding incidence matrix $I_\EE \in \R^{V \times \EE}$ is block-diagonal with blocks $\IEEl \in \R^{\Vl \times \EEl}$.
\blue{We say that $G_\EE$ is a chain graph, if each component of $G_\EE$ is a chain graph,
and analogously for a star graph.}

Propositions~\ref{pro1:Ac}, \ref{pro2:Ac}, and \ref{pro3:Ac} imply the main result of this section.

\begin{theorem} \label{thm:Ac}
Let $G_k=(V,E,k)$ be a labeled, simple digraph with strongly connected components,
and let $G_\EE = (V,\EE)$ be an auxiliary digraph. 
Then, there exists an invertible, block-diagonal matrix $\Ac \in \R^{\EE \times \EE}$, 
called the \blue{{\em \irr matrix} of the graph Laplacian}, such that
\[
A_k \diag(K_k) = - I_\EE \Ac I_\EE^\trans .
\]
If 
$G_\EE$ is a chain graph,
then $\Ac$ is non-negative with positive diagonal.
If 
$G_\EE$ is a star graph,
then $\Ac$ is diagonally dominant with positive diagonal and non-positive off-diagonal entries.
\end{theorem}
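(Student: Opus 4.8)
The plan is to reduce Theorem~\ref{thm:Ac} to the single-component case already handled in Propositions~\ref{pro1:Ac}--\ref{pro3:Ac} by exploiting the block-diagonal structure. First I would observe that since $G_k$ has strongly connected components $\Gkl$, both $A_k$ and $\diag(K_k)$ are block-diagonal (as recalled in the excerpt), hence so is $A_k \diag(K_k)$, with blocks $\Akl \diag(\Kkl) \in \R^{\Vl \times \Vl}$. Likewise the auxiliary graph $G_\EE$ decomposes into connected components $\GEEl = (\Vl,\EEl)$ with $|\EEl| = |\Vl|-1$, so $I_\EE$ is block-diagonal with blocks $\IEEl$, and therefore $I_\EE \Ac I_\EE^\trans$ is block-diagonal whenever $\Ac$ is. So the identity $A_k \diag(K_k) = -I_\EE \Ac I_\EE^\trans$ holds globally if and only if it holds block by block.

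Next I would apply Proposition~\ref{pro1:Ac} to each component $\Gkl$ together with its auxiliary tree $\GEEl$: this yields, for each $\la$, a unique invertible matrix $\Akl_{\EEl} \in \R^{\EEl \times \EEl}$ with $\Akl \diag(\Kkl) = -\IEEl \, \Akl_{\EEl} \, (\IEEl)^\trans$. Setting $\Ac$ to be the block-diagonal matrix with these blocks, I get an invertible (product of determinants of the blocks, all nonzero) block-diagonal matrix satisfying the required decomposition. Uniqueness is inherited blockwise from Proposition~\ref{pro1:Ac}, and any matrix satisfying the global identity must, by the block-diagonal structure of $I_\EE$ and the fact that $\ker I_\EE = \{0\}$, itself be block-diagonal with the prescribed blocks.

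For the two refinements I would simply invoke Propositions~\ref{pro2:Ac} and~\ref{pro3:Ac} on each block. If $G_\EE$ is a chain graph, then by definition each $\GEEl$ is a chain graph, so each block $\Akl_{\EEl}$ is non-negative with positive diagonal by Proposition~\ref{pro2:Ac}; a block-diagonal matrix whose blocks all have this property has it too (the off-diagonal blocks are zero, hence non-negative, and the diagonal entries are exactly those of the blocks). The star-graph case is identical, using Proposition~\ref{pro3:Ac}: each block is diagonally dominant with positive diagonal and non-positive off-diagonal entries, and diagonal dominance (row and column) as well as the sign pattern are preserved under forming a block-diagonal matrix, since the newly introduced cross-block entries are all zero.

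I do not anticipate a genuine obstacle here: the theorem is essentially a bookkeeping assembly of the three single-component propositions, and the only thing to be careful about is making the ``block-diagonal is preserved'' arguments precise — in particular, that invertibility of the assembled matrix follows from invertibility of each block, and that the sign/dominance properties are not spoiled by the zero off-diagonal blocks. One should also note explicitly that the definition of ``chain graph'' and ``star graph'' for $G_\EE$ is the componentwise one given just before the theorem, so that Propositions~\ref{pro2:Ac} and~\ref{pro3:Ac} apply verbatim to each $\GEEl$.
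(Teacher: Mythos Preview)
Your proposal is correct and follows exactly the approach the paper takes: the paper simply states that Propositions~\ref{pro1:Ac}, \ref{pro2:Ac}, and \ref{pro3:Ac} imply Theorem~\ref{thm:Ac}, leaving the block-diagonal assembly implicit. Your write-up makes that assembly explicit (and even adds a uniqueness remark the theorem does not claim), but the underlying argument is identical.
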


\blue{
Explicitly,
\[
I_\EE = \begin{pmatrix} I_\EE^1 & & 0 \\ & \ddots & \\ 0 & & I_\EE^\ell \end{pmatrix} \in \R^{V \times \EE}
\quad\text{and}\quad
\Ac = \begin{pmatrix} \Ac^1 & & 0 \\ & \ddots & \\ 0 & & \Ac^\ell \end{pmatrix} \in \R^{\EE \times \EE} .
\]
Note that $|\EEl| = |\Vl|-1$, $\lambda=1,\ldots,\ell$, and hence $|\EE|=|V|-\ell$.
That is, an auxiliary graph has $|\EE|=|V|-\ell$ edges,
and a \blue{\irr matrix} has $|\EE|=|V|-\ell$ rows and columns.
}


\section{Mass-action systems} \label{sec:mas}

We apply the graph-theoretic/algebraic results from the previous section to mass-action systems.
We start with a brief summary of fundamental concepts and results.

A {\em chemical reaction network} $(G,y)$
is given by a simple directed graph $G = (V,E)$ with a finite set of vertices $V = \{1, \ldots, m \}$
and a set of edges {\em (reactions)} $E \subseteq V \times V$
together with an \blue{injective} map $y \colon V \to \R^n_\ge$ (a matrix $Y \in \R^{n \times V}_\ge$),
assigning to every vertex $i \in V$ a {\em complex} $y(i) \in \R^n_\ge$.
(The digraph $G$ is ``embedded'' in $\R^n_\ge$.)
If the components of $G$ (the {\em linkage classes}) are strongly connected, then the network is called {\em weakly reversible}.

A {\em mass-action system} $(G_k,y)$
is a chemical reaction network $(G,y)$
where every edge $(i \to i') \in E$ is labeled with a {\em rate constant} $k_{i \to i'} > 0$,
yielding the labeled, simple digraph $G_k = (V,E,k)$ with $k \in \R^E_>$.
(If the network is weakly reversible, then also the mass-action system is called weakly reversible.)

The resulting dynamical system for $x \in \R^n_\ge$ (the {\em concentrations} of $n$ molecular species) is given by
\[
\dd{x}{t} = f_k(x) = \sum_{(i \to i') \in E} k_{i \to i'} \, x^{y(i)} \left(y(i') - y(i)\right) .
\]
The right-hand side of the ODE can be decomposed as
\[
f_k(x) = Y I_E \diag(k) I_{E,s}^\trans \, x^Y = Y A_k \, x^Y 
\]
where $I_E \in \R^{V \times E}$ is the incidence matrix,
$I_{E,s} \in \R^{V \times E}$ is the ``source matrix'',
and
\[
A_k = I_E \diag(k) I_{E,s}^\trans\in \R^{V \times V}
\]
is the resulting Laplacian matrix of the labeled, simple digraph $G_k$.
In the following, we consider the dynamical system in the form
\begin{equation} \label{dynsysAk}
\dd{x}{t} = f_k(x) = Y A_k \, x^Y .
\end{equation}

The {\em stoichiometric subspace} is given by
$
S = \im (Y I_E) .
$
Clearly, $\dd{x}{t} = f_k(x) \in S$, and hence $x(t) \in x(0)+S$.
For $x' \in \R^n_>$, the forward invariant set $(x'+S) \cap \R^n_\ge$ is called a positive {\em stoichiometric (compatibility) class}.

If an equilibrium $x \in \R^n_>$ of the ODE fulfills
\begin{equation}
A_k \, x^Y = 0 ,
\end{equation}
then it is a positive {\em complex-balanced} equilibrium (CBE), 
\blue{also known as vertex-balanced steady state.}

\blue{{\bf Remark.}
In the linear setting, the Laplacian matrix captures state transitions on a graph.
Let $\psi = x^Y$ be the state variable, given by the vector of monomials.
If $A_k \, \psi = 0$, 
then transitions are balanced (at every vertex of the graph), and $x$ is a CBE.
If $Y A_k \, x^Y = 0$ (but not $A_k \, x^Y = 0$), then $x$ is a general equilibrium.
}

As shown by Horn~\cite{Horn1972} and Horn \& Jackson~\cite{HornJackson1972},
if there exists a positive CBE (in some stoichiometric class),
then 
\begin{enumerate}
\item
the mass-action system is weakly reversible~\cite[Theorem~3C]{Horn1972},
\item
the equilibrium is asymptotically stable, and all equilibria are complex-balanced~\cite[Theorem~6A]{HornJackson1972},
and, 
\item
there exists a unique positive (necessarily complex-balanced) equilibrium in every stoichiometric class~\cite[Lemma~4B]{HornJackson1972}.
\end{enumerate}

In the following remarks, we elaborate on results 1, 2, and 3.

{\bf Remark} (result 1).
Let $G$ be weakly reversible
and $G_\EE = (V,\EE)$ be some auxiliary digraph.
By Theorem~\ref{thm:Ac}, $A_k = - I_\EE \Ac I_\EE^\trans \diag(K_k^{-1})$.
Further, $\ker(I_\EE) = \ker(\Ac) = \{0\}$.
Hence, a positive CBE $x \in \R^n_>$ is given by
\[
I_\EE^\trans \diag(K_k^{-1}) \, x^Y = 0 ,
\]
that is, by the binomial equations
\begin{equation} \label{cbe}
\frac{x^{y(i')}}{(K_k)_{i'}} - \frac{x^{y(i)}}{(K_k)_i} = 0 \quad \text{for } (i \to i') \in \EE .
\end{equation}
Given a particular positive CBE $x^* \in \R^n_>$,
Eqn.~\eqref{cbe} is equivalent to 
\[
\left(\frac{x}{x^*}\right)^{y(i')} = \left(\frac{x}{x^*}\right)^{y(i)} \quad \text{for } (i \to i') \in \EE
\]
and further to $(y(i')-y(i))^\trans \ln (x/x^*)=0$ for $(i \to i') \in \EE =0$, that is, to ${(Y I_\EE)^\trans \ln (x/x^*) = 0}$.
Since $\ker (Y I_\EE)^\trans = (\im Y I_\EE)^\perp = (\im Y I_E)^\perp = S^\perp$,
the set of all positive CBEs is given by the monomial parametrization $x= x^* \circ \e^{S^\perp}$.

{\bf Remark} (result 2).
In Section~\ref{sec:bdi}, we extend the classical stability result.
As it turns out, it holds not only for complex-balanced equilibria of mass-action systems,
but for all equilibria of binomial differential inclusions.

In Appendix~\ref{app:stab}, we give another proof for the asymptotic stability of complex-balanced equilibria
(and the non-existence of other steady states) without using differential inclusions.

{\bf Remark} (result 3).
Technically, result 3 states that $|(x^* \circ \e^{S^\perp}) \cap (x'+S)| = 1$,
for all $x^*, x' \in \R^n_>$.
An equivalent result appears in toric geometry \cite{Fulton1993}, where it is related to moment maps,
and in statistics \cite{PachterSturmfels2005}, where it is related to log-linear models
and called Birch's theorem after~\cite{Birch1963}.
For generalizations, see \cite{MuellerRegensburger2012,MuellerRegensburger2014,MuellerHofbauerRegensburger2019,CraciunMuellerPanteaYu2019}
and \cite{GopalkrishnanMillerShiu2014}.



\subsection{Binomial structure} \label{sec:bin}

Given that the network is weakly reversible (the components of the graph $G$ are strongly connected),
our main graph-theoretic/algebraic result, Theorem~\ref{thm:Ac}, implies that 
the dynamical system~\eqref{dynsysAk} for the mass-action system $(G_k,y)$ can be decomposed as
\begin{equation} \label{dynsysAc}
\dd{x}{t} = f_{k,\EE}(x) = - Y I_\EE \Ac I_\EE^\trans \diag(K_k^{-1}) \, x^Y ,
\end{equation}
\blue{where} $G_\EE = (V,\EE)$ is some auxiliary digraph.

Again, we have a closer look at the term $I_\EE^\trans \diag(K_k^{-1}) \, x^Y \in \R^\EE$.
Indeed,
\[
\left( I_\EE^\trans \diag(K_k^{-1}) \, x^Y \right)_{i \to i'} =
\frac{x^{y(i')}}{(K_k)_{i'}} - \frac{x^{y(i)}}{(K_k)_i} \quad \text{for } (i \to i') \in \EE .
\]
That is, 
the right-hand side of the dynamical system is a sum of binomials.
This is obvious for symmetric digraphs (reversible networks); \blue{cf.~\cite[Eqn.~(14)]{CraciunDickensteinShiuSturmfels2009}.}
By Theorem~\ref{thm:Ac}, it also holds for digraphs with strongly connected components (weakly reversible networks).

In particular, for a complex-balanced equilibrium, not just the right-hand side of \eqref{dynsysAc} is zero,
but every individual binomial is zero.
In this sense,
the ODE \eqref{dynsysAc} does not only have binomial {\em steady states} (positive complex-balanced equilibria,
given by binomial equations),
but truly is a binomial {\em dynamical system}.



\subsection{Monomial evaluation orders and
corresponding polyhedra/polyhedral cones} \label{sec:mon}

Let $(G_k,y)$ be a 
mass-action system 
based on the labeled, simple digraph $G_k=(V,E,k)$ and the map $y$ (the matrix $Y$).

For \blue{fixed} $x \in \R^n_>$,
the values of the monomials $x^{y(i)}$ with $i \in V$ are ordered \blue{(using the order on $\R$).}
For simplicity, we first consider a connected graph $G=(V,E)$.
Obviously, 
the {\em total} order
\[
x^{y(i_1)} \le x^{y(i_2)} \le \ldots \le x^{y(i_m)} 
\]
can be represented by a chain graph,
\[
i_1 \to i_2 \to \ldots \to i_m.
\]
If the order is non-strict \blue{(if some monomials have the same value),} then the representation is not unique.
Analogously,
the {\em partial} order
\[
x^{y(i_1)} \le x^{y(i_m)}, \, x^{y(i_2)} \le x^{y(i_m)}, \,  \ldots, \, x^{y(i_{m-1})} \le x^{y(i_m)}
\]
can be represented by a star graph,
\[
i_1 \to i_m, \, i_2 \to i_m, \, \ldots, \, i_{m-1} \to i_m .
\]
In \blue{general}, every auxiliary graph  $G_\EE=(V,\EE)$ represents a partial order on the vertices of $G$
and hence on the \blue{values of the} monomials.

In the following, 
we will consider monomials with coefficients:
\begin{itemize}
\item
$\frac{x^{y(i)}}{(K_k)_i}$,
for weakly reversible networks with tree constants $K_k \in \R^V_>$, and
\item
$(\frac{x}{x^*})^{y(i)}$,
for given positive CBE $x^* \in \R^n_>$.
\end{itemize}

\subsubsection*{Weak reversibility}

Let $(G_k,y)$ be a weakly reversible mass-action system,
and \blue{fix} $x \in \R^n_>$.

We call an 
order on the entries of $\frac{x^Y}{K_k} \in \R^V_>$ 
\blue{that is total within connected components,
but does not relate entries in different components,}
a {\em monomial evaluation order}
(since the notion {\em monomial order\blue{(-ing)}} has a different meaning in algebra).
We represent the order by a chain graph $G_{\EE} = (V,\EE)$
and often just by the set of edges $\EE$.
Explicitly,
$(i \to i') \in \EE$ 
implies $\frac{x^{y(i)}}{(K_k)_i} \le \frac{x^{y(i')}}{(K_k)_{i'}}$.
Thereby, the vertices $i,i' \in V$ are necessarily in the same 
component.
If the order is non-strict, then $\EE$ is not unique.

Analogously, the maximal entries of $\frac{x^Y}{K_k} \in \R^V_>$ \blue{within connected components} are greater or equal than all other entries \blue{in the respective components.}
We represent this 
order by a star graph $G_{\EE} = (V,\EE)$.
If there is more than one maximal entry \blue{within a component,} 
then $\EE$ is not unique.

Conversely,
\blue{fix an auxiliary graph}
$G_\EE=(V,\EE)$,
for example, a chain graph or a star graph.
The subset of $\R^n_>$ with monomial evaluation order represented by $\EE$
is given by
\begin{equation} \label{st1}
\begin{aligned} 
\ST 
&= \left\{ x \in \R^n_> \mid \frac{x^{y(i')}}{(K_k)_{i'}} - \frac{x^{y(i)}}{(K_k)_i} \ge 0 \text{ for } (i \to i') \in \EE \right\} \\
&= \left\{ x \in \R^n_> \mid I_\EE^\trans \diag(K_k^{-1}) \, x^Y \ge 0 \right\} .
\end{aligned}
\end{equation}
By the monotonicity of the logarithm,
\begin{align*}
\ST 
&= \left\{ x \in \R^n_> \mid (y(i')-y(i))^\trans \ln x \ge \ln \frac{(K_k)_{i'}}{(K_k)_i} \text{ for } (i \to i') \in \EE \right\} \\
&= \left\{ x \in \R^n_> \mid (Y I_\EE)^\trans \ln x \ge I_\EE^\trans \ln K_k \right\} .
\end{align*}
Hence,
\[
x \in \ST
\quad \Leftrightarrow \quad
\ln x \in \PE 
\]
with the polyhedron
\begin{equation} \label{pe}
\begin{aligned} 
\PE
&= \left\{ z \in \R^n \mid (Y I_\EE)^\trans z \ge I_\EE^\trans \ln K_k \right\} .
\end{aligned}
\end{equation}

\subsubsection*{Complex balancing}

If there exists a positive CBE $x^\ast \in \R^n_>$,
then the polyhedra become polyhedral cones.

\blue{Fix an auxiliary graph} $G_\EE=(V,\EE)$.
Using complex balancing~\eqref{cbe} \blue{for $x^*$}, 
the subset~\eqref{st1} can be written as
\begin{align*}
\ST 
&= \left\{ x \in \R^n_> \mid \left(\frac{x}{x^*}\right)^{y(i')} - \left(\frac{x}{x^*}\right)^{y(i)} \ge 0 \text{ for } (i \to i') \in \EE \right\} \\
&= \left\{ x \in \R^n_> \mid I_\EE^\trans \left(\frac{x}{x^*}\right)^Y \ge 0 \right\} .
\end{align*}
By the monotonicity of the logarithm,
\begin{align*} 
\ST 
&= \left\{ x \in \R^n_> \mid (y(i')-y(i))^\trans \ln \frac{x}{x^*} \ge 0 \text{ for } (i \to i') \in \EE \right\} \\
&= \left\{ x \in \R^n_> \mid (Y I_\EE)^\trans \ln \frac{x}{x^*} \ge 0 \right\} . \nonumber
\end{align*}
Hence,
\[
x \in \ST
\quad \Leftrightarrow \quad
\ln \frac{x}{x^*} \in \CE 
\]
with the polyhedral cone
\begin{align} \label{ce}
\CE 
&= \left\{ z \in \R^n \mid (Y I_\EE)^\trans z \ge 0 \right\} ,
\end{align}
which does not depend on $k$.
(Of course, $x^*$ depends on $k$.)
The lineality space of $\CE$ does not even depend on~$\EE$,
\[
\lin \CE = \ker \, (Y I_\EE)^\trans = (\im Y I_\EE)^\perp = (\im Y I_E)^\perp = S^\perp .
\]

\blue{
Obviously, $S^\perp = \lin \CE \subseteq \CE$.
For fixed $\EE$, there are two possibilities:}
\begin{itemize}
\item
$\CE = S^\perp$.
Then, all defining \blue{(non-strict)} inequalities of $\CE$ \blue{(and $\ST$)} are fulfilled with equality,
and $\ST = x^* \circ \e^{S^\perp}$ equals the set of complex-balanced equilibria.
\item
\blue{$\CE \supset S^\perp$. Then $\CE$ and $\ST$ are full-dimensional,}
and the monomial evaluation order is
strict in the interior of $\ST$ and non-strict on the boundary \blue{(where some monomials have the same value).}
\end{itemize}

In the following study of complex-balanced mass-action systems (and their extension to binomial differential inclusions),
we use chain graphs $G_\EE$, representing monomial evaluation orders.
In this setting, a full-dimensional subset $\ST$ is called a {\em stratum},
cf.~\cite{SiegelJohnston2011}.
This term has also been used for partial orders related to the original graph, rather than to an auxiliary graph,
cf.~\cite{CraciunDickensteinShiuSturmfels2009}.

\blue{\bf Remark.}
As stated above, for every $x \in \R^n_>$,
there is a (non-unique) $\EE$ such that $x \in \ST$.
In particular, $\R^n_>$ is a union of strata which intersect only on their boundaries.
\blue{Correspondingly, $\R^n$ is a union of polyhedral cones $\CE$.
Indeed, by the monotonicity of the logarithm, an order on the entries of ${(\frac{x}{x^*})^Y \in \R^V_>}$ (within components)
is equivalent to an order on the entries of $Y^\trans z \in \R^V$ with $z = \ln \frac{x}{x^*}$,
and the set of pairs of vertices within components,
\[
\Omega = \left\{ i \to i' \mid i,i' \in V^\la, \, \la=1,\ldots,\ell \right\} ,
\]
induces an arrangement of central hyperplanes,
\[
h_{i \to i'} = \{ z \in \R^n \mid ( y(i') - y(i))^T z = 0 \}, \quad (i \to i') \in \Omega .
\]
The central hyperplane arrangement decomposes $\R^n$ into open polyhedral cones called {\em faces};
full dimensional faces are called {\em cells}.
In our terminology, a cell is the interior of a polyhedral cone $\CE$
and hence corresponds to the interior of a stratum $\ST$.
}


{\bf Example.} Let $(G_k,y)$ be a mass-action system given by 
a strongly connected graph $G=(V,E)$ with $V=\{1,2,3\}$ (and arbitrary $E$)
and $y(1) = {2 \choose 1}$, $y(2) = {0 \choose 2}$, $y(3) = {1 \choose 0}$.
For simplicity, assume $x^* = {1 \choose 1}$.
The corresponding monomials are $(x/x^*)^{y(1)} = x^{y(1)} = x_1^2 x_2$, $x^{y(2)} = x_2^2$, and $x^{y(3)} = x_1$.

\begin{tikzpicture}
\begin{axis}[width=0.6\textwidth,height=0.6\textwidth,
    axis lines=middle,xtick=\empty,ytick=\empty,
    xmin=-.1,xmax=2.1,ymin=-.1,ymax=2.1,samples=100,
    xlabel={$x_1$},ylabel={$x_2$},title={Stratum in $\R^2_>$}]
    \addplot[black, ultra thick, dashed, domain=.5:2] (x,1/x);
    \addplot[name path = A, blue, ultra thick, domain=0:1] (x*x,x);
    \addplot[blue, ultra thick, domain=1:1.41] (x*x,x);
    \addplot[name path = B, green, ultra thick, domain=0:1] (x,x*x);
    \addplot[green, ultra thick, domain=1:1.41] (x,x*x);
    \addplot [gray!20, opacity=0.5] fill between [of = A and B, soft clip={}];
    \addplot[mark=*] coordinates {(1,1)}; \node [right] at (1.1,1) {$x^*$};
    \node [right] at (.33,.5) {$\ST$};
    \node [left] at (0.58,1.75) {$\scriptstyle 3$};\node [right] at (0.58,1.75) {$\scriptstyle 1$};
    \node [left] at (1.32,1.75) {$\scriptstyle 2$};\node [right] at (1.32,1.75) {$\scriptstyle 1$};
    \node [above] at (1.75,1.32) {$\scriptstyle 2$};\node [below] at (1.75,1.32) {$\scriptstyle 3$};
\end{axis}
\end{tikzpicture}
\qquad
\begin{tikzpicture}
\begin{axis}[width=0.6\textwidth,height=0.6\textwidth,
    axis lines=middle,xtick=\empty,ytick=\empty,
    xmin=-1.1,xmax=1.1,ymin=-1.1,ymax=1.1,samples=10,
    xlabel={$z_1$},ylabel={$z_2$},title={Polyhedral cone in $\R^2$}] 
    \addplot[black, ultra thick, dashed, domain=-.79:.79]{-x};
    \addplot[name path = A, blue, ultra thick, domain=-1:0]{x/2};
    \addplot[blue, ultra thick, domain=0:1]{x/2};
    \addplot[green, ultra thick, domain=0:1](x/2,x);
    \addplot[name path = B, green, ultra thick, domain=-1:0](x/2,x);
    \addplot[gray!20, opacity=0.5] fill between [of = A and B, soft clip={}];
    \node [right] at (-.64,-.5) {$\CE$};
\end{axis}
\end{tikzpicture}

The positive orthant is a union of strata corresponding to monomial evaluation orders.
In particular, consider the stratum given by the order $x^{y(1)} \le x^{y(2)} \le x^{y(3)}$,
that is, $\ST$ with $\EE = \{1 \to 2, 2 \to 3\}$, bounded by the green and blue lines.
The green line specifies $x^{y(1)} = x^{y(2)}$; above it, $x^{y(2)} > x^{y(1)}$, as indicated by the corresponding vertices 2 and 1.
The blue line specifies $x^{y(2)} = x^{y(3)}$; below it, $x^{y(3)} > x^{y(2)}$.
(The \blue{dashed black} line specifies $x^{y(1)} = x^{y(3)}$, which does not bound the particular stratum.)
In the interior of $\ST$, the order is strict.
In logarithmic coordinates $z = \ln (x/x^*)$, the stratum corresponds to the polyhedral cone~$\CE$.

\begin{tikzpicture}
\begin{axis}[width=0.6\textwidth,height=0.6\textwidth,
    axis lines=middle,xtick=\empty,ytick=\empty,
    xmin=-.1,xmax=2.1,ymin=-.1,ymax=2.1,samples=100,
    xlabel={$x_1$},ylabel={$x_2$},title={\small No stratum for $\EE=\{1\to2\to3,\,4\to5\}$}]
    \addplot[red, ultra thick, domain=.5:2] (x,1/x);
    \addplot[name path = A, blue, ultra thick, domain=0:1] (x*x,x);
    \addplot[blue, ultra thick, domain=1:1.41] (x*x,x);
    \addplot[name path = B, green, ultra thick, domain=0:1] (x,x*x);
    \addplot[green, ultra thick, domain=1:1.41] (x,x*x);
    \addplot[mark=*] coordinates {(1,1)}; \node [right] at (1.15,0.975) {$\{x^*\} = \ST$};
    \node [right] at (.38,.5) {$\mathcal{S}'$}; 
    \node [left] at (0.58,1.75) {$\scriptstyle 4$};\node [right] at (0.58,1.75) {$\scriptstyle 5$};
    \node [left] at (1.32,1.75) {$\scriptstyle 2$};\node [right] at (1.32,1.75) {$\scriptstyle 1$};
    \node [above] at (1.75,1.32) {$\scriptstyle 2$};\node [below] at (1.75,1.32) {$\scriptstyle 3$};
\end{axis}
\end{tikzpicture}
\hfill
\blue{
\begin{minipage}[b]{0.48\textwidth} \small
Finally, let $G$ have two strongly connected components $G'=(V',E')$, $G''=(V'',E'')$
with $V'=\{1,2,3\}$, $V''=\{4,5\}$, 
$y(1)$, $y(2)$, $y(3)$ as above, 
and $y(4) = {0 \choose 0}$, $y(5) = {1 \choose 1}$.
(Assume $x^* = {1 \choose 1}$, and hence $(x/x^*)^{y(4)} = x^{y(4)} = 1$, $x^{y(5)} = x_1 x_2$.)
Consider the order $x^{y(1)} \le x^{y(2)} \le x^{y(3)}$ and $x^{y(4)} \le x^{y(5)}$,
that is, $\ST$ with $\EE = \{{1 \to 2}, {2 \to 3}, {4 \to 5}\}$.
Explicitly, $\ST = \mathcal{S}' \cap \mathcal{S}''$
with $\mathcal{S}' = \{ x \mid x^{y(1)} \le x^{y(2)} \le x^{y(3)} \}$ (as above)
and $\mathcal{S}'' = \{ x \mid x^{y(4)} \le x^{y(5)} \}$
(the region on and above the red line).
As a consequence, $\ST = \{ x^* \}$ is trivial
(equals the set of complex-balanced equilibria).
In logarithmic coordinates, 
the corresponding polyhedral cone~$\CE = \{0\}$ is trivial.
\end{minipage}
}

\blue{
In general, $\ST = x^* \circ \e^{S^\perp}$ (equals the set of complex-balanced equilibria)
if and only if $\CE = S^\perp$. 
In the example, $S = \R^2$ and $S^\perp = \{0\}$.
}


\subsection{Binomial differential inclusions} \label{sec:bdi}

Finally, we extend a classical result by Horn and Jackson from 1972.
%
\begin{theorem}[cf.~\cite{HornJackson1972}, Theorem 6A] \label{thm:cbe}
Let $(G_k,y)$ be a mass-action system
and $x^* \in \R^n_>$ be a positive CBE
of the dynamical system~\eqref{dynsysAk}. 
Then,
\[
\left( \ln \frac{x}{x^*} \right)^\trans \! f_k(x) < 0
\]
for all $x \in \R^n_>$ that are not complex-balanced equilibria.
Hence, 
(i) all positive equilibria are complex-balanced, and
(ii) $x^*$ is asymptotically stable. 
\end{theorem}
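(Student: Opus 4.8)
The whole statement rests on the single inequality; the two consequences are then immediate. The plan is to read the left-hand side as the time derivative of a Lyapunov function. Introduce the (pseudo-Helmholtz) function $g(x) = \sum_{j} \bigl( x_j \ln \tfrac{x_j}{x_j^*} - x_j + x_j^* \bigr)$ on $\R^n_>$; it is strictly convex with $\nabla g(x) = \ln(x/x^*)$ (componentwise) and $\nabla g(x^*) = 0$, so $x^*$ is its unique minimizer on $\R^n_>$, hence also on the stoichiometric class $(x^*+S) \cap \R^n_>$. Along a trajectory of~\eqref{dynsysAk} one has $\dd{}{t} g(x(t)) = \bigl( \ln \tfrac{x(t)}{x^*} \bigr)^\trans f_k(x(t))$, so the claimed inequality says exactly that $g$ is a strict Lyapunov function on the class away from complex-balanced equilibria. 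Since near $x^*$ the relevant sublevel sets of $g$ restricted to the class are compact and contained in $\R^n_>$ (classical point, handled as usual), the standard Lyapunov/LaSalle argument yields (ii); and for (i), any positive equilibrium $x$ has $f_k(x)=0$, so $\bigl(\ln\tfrac{x}{x^*}\bigr)^\trans f_k(x)=0$, and the strict inequality forces $x$ to be complex-balanced.

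It remains to show $\bigl( \ln \tfrac{x}{x^*} \bigr)^\trans f_k(x) < 0$ for every $x \in \R^n_>$ that is not a CBE. Fix such an $x$, set $z := \ln(x/x^*)$, and (as recalled in Section~\ref{sec:mon}) choose a \emph{chain} auxiliary graph $G_\EE = (V,\EE)$ realizing the monomial evaluation order of $x$, i.e.\ with $x \in \ST$; thus $q := I_\EE^\trans \diag(K_k^{-1}) \, x^Y \ge 0$ entrywise, with components $q_{i \to i'} = \tfrac{x^{y(i')}}{(K_k)_{i'}} - \tfrac{x^{y(i)}}{(K_k)_i}$. Since a CBE exists, $G$ is weakly reversible, and \eqref{dynsysAc} (i.e.\ Theorem~\ref{thm:Ac} for the chain graph $\EE$) gives $f_k(x) = - Y I_\EE \Ac\, q$ with $\Ac \in \R^{\EE \times \EE}$ \emph{non-negative with positive diagonal}. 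Writing $p := I_\EE^\trans Y^\trans z \in \R^\EE$, with components $p_{i \to i'} = (y(i') - y(i)) \cdot z$, we obtain
\[
\left( \ln \frac{x}{x^*} \right)^\trans f_k(x) = - z^\trans Y I_\EE \Ac\, q = - p^\trans \Ac\, q .
\]

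Now I claim $p \ge 0$ entrywise and, more precisely, $p_e = 0 \iff q_e = 0$ for each $e \in \EE$ — this is the only place the hypothesis ``$x^*$ is a CBE'' enters. Complex balancing of $x^*$ means $I_\EE^\trans \diag(K_k^{-1}) (x^*)^Y = 0$, so $(x^*)^{y(i)}/(K_k)_i$ is constant, say $= c > 0$, over all vertices in a given connected component (a chain component spans that component); hence for $(i \to i') \in \EE$ in that component,
\[
q_{i \to i'} = c \bigl( \e^{y(i') \cdot z} - \e^{y(i) \cdot z} \bigr) , \qquad p_{i \to i'} = y(i') \cdot z - y(i) \cdot z ,
\]
and the sign equivalence follows from $c > 0$ and strict monotonicity of $t \mapsto \e^t$; in particular $x \in \ST$ (so $q \ge 0$) forces $p \ge 0$. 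Therefore every term of $p^\trans \Ac\, q = \sum_{e,e' \in \EE} (\Ac)_{e,e'}\, p_e\, q_{e'}$ is non-negative. Finally, $x$ is not a CBE, so $q \neq 0$ (as $\Ac$ is invertible and $\ker I_\EE = \{0\}$, $A_k x^Y = - I_\EE \Ac\, q \neq 0$ when $q \neq 0$); picking $e_0$ with $q_{e_0} > 0$, we get $p_{e_0} > 0$ and the diagonal term $(\Ac)_{e_0,e_0}\, p_{e_0}\, q_{e_0} > 0$, whence $p^\trans \Ac\, q > 0$ and $\bigl(\ln\tfrac{x}{x^*}\bigr)^\trans f_k(x) < 0$, as required.

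\textbf{Main obstacle.} The crux — and the point where the new decomposition does the real work — is aligning two positivity structures on the \emph{same} $\EE$: the stratum $\ST$ supplies $q \ge 0$, while a chain auxiliary graph supplies $\Ac \ge 0$ with positive diagonal (Theorem~\ref{thm:Ac}), and these must hold simultaneously, which is possible precisely because every $x \in \R^n_>$ lies in some chain-graph stratum. The non-negativity of the \irr matrix then replaces the convexity estimate $\e^a(b-a) \le \e^b - \e^a$ of the classical proof — here only the weaker monotonicity of $\exp$ is used, to match the signs of $p$ and $q$. The remaining care is the routine passage from the Lyapunov inequality to genuine (local, relative-to-class) asymptotic stability, i.e.\ compactness of sublevel sets of $g$ near $x^*$ inside $\R^n_>$ together with the fact that $x^*$ is the only CBE in that neighborhood.
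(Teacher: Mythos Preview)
Your proof is correct and follows essentially the same route as the paper's own argument in Appendix~\ref{app:stab}: choose a chain graph $G_\EE$ with $x\in\ST$, use Theorem~\ref{thm:Ac} to write $\bigl(\ln\tfrac{x}{x^*}\bigr)^\trans f_k(x)=-p^\trans\Ac\,q$ (your $p,q$ are the paper's $a,b$), obtain $p,q\ge0$ from the stratum and complex balancing, and conclude strictness from the positive diagonal of $\Ac$ together with $q_{e_0}>0\Rightarrow p_{e_0}>0$ at some edge~$e_0$. Your exposition is slightly more explicit about the sign equivalence $p_e=0\iff q_e=0$ via monotonicity of $\exp$, but the substance is identical.
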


All proofs are based on the entropy-like Lyapunov function $L \colon \R^n_> \to \R$,
\begin{equation}
L(x) = \sum_{i=1}^n x_i \left( \ln \frac{x_i}{x^*_i} -1 \right) + x^*_i .
\end{equation}
For $x \in \R^n_>$,
\[
L(x) \ge 0 \quad \text{ with ``='' if and only if } x = x^* ,
\]
$\nabla L = \left( \ln \frac{x}{x^*} \right)^\trans$,
and hence
\[
\dd{}{t} \, L(x(t)) = \nabla L \, \dd{x}{t} = \left( \ln \frac{x}{x^*} \right)^\trans \! f_k(x) .
\]
If $\left( \ln \frac{x}{x^*} \right)^\trans \! f_k(x) \le 0$ with ``='' if and only if $x = x^*$, then $L(x)$ is a strict Lyapunov function,
and $x^*$ is asymptotically stable.

Previous proofs further use inequalities for the exponential function or the logarithm and cycle decomposition of the graph,
cf.~\cite{HornJackson1972,Sontag2001,Anderson2011,Gopalkrishnan2014}.
For a new proof using monomial evaluation orders and corresponding geometric objects (strata and polyhedral cones), see Appendix~\ref{app:stab}.

In the following, we extend the stability result
and provide a maximally transparent, polyhedral-geometry proof.
First, we relate the dynamics in a given stratum
to the corresponding polyhedral cone.

\begin{tikzpicture}
\begin{axis}[width=0.6\textwidth,height=0.6\textwidth,
    axis lines=middle,xtick=\empty,ytick=\empty,
    xmin=-1.1,xmax=1.1,ymin=-1.1,ymax=1.1,samples=10,
    xlabel={$z_1$},ylabel={$z_2$},title={Polar cone}] 
    \addplot[name path = VL, gray, ultra thin, domain=0:1](-x/1000,.9*x);
    \addplot[name path = VR, gray, ultra thin, domain=0:1](x/1000,.9*x);
    \addplot[name path = H, ultra thin, domain=0:1](.9*x,x/1000);
    \addplot[name path = A, ultra thick, domain=-1:0]{x/2};
    \addplot[name path = B, ultra thick, domain=-1:0](x/2,x);
    \addplot[name path = C, ultra thick, domain=-1:0](x/2,-x);
    \addplot[name path = D, ultra thick, domain=0:1]{-x/2};
    \addplot[gray!20, opacity=0.5] fill between [of = A and B, soft clip={}];
    \addplot[gray!20, opacity=0.5] fill between [of = C and VL, soft clip={}];
    \addplot[gray!20, opacity=0.5] fill between [of = VR and H, soft clip={}];
    \addplot[gray!20, opacity=0.5] fill between [of = H and D, soft clip={}];
    \node [right] at (-.64,-.5) {$\CE$};
    \node [right] at (.15,.25) {$\CE^\pol$};
    \addplot[mark=*,red] coordinates {(-.85,-.65)}; \node [below,red] at (-.85,-.65) {$\ln \frac{x}{x^*}$};
    \addplot[mark=*,red] coordinates {(.25,.65)}; \node [right,red] at (.25,.65) {$f_k(x)$};
\end{axis}
\end{tikzpicture}
\hfill
\begin{minipage}[b]{0.48\textwidth} \small
In Proposition~\ref{prop:emb} below, 
we use the concept of the polar cone \[C^\pol = \{ y \mid y \cdot x \le 0 \text{ for all } x \in C\}\] of a set $C$,
\blue{where} $C^\pol \subset (\lin C)^\perp$,
and $y \in \innt C^\pol$ if and only if $y \cdot x < 0$ for all $x \in C \setminus \lin C$.
In our setting, a monomial evaluation order \blue{(represented by a chain graph $G_\EE$)}
determines a stratum $\ST$ \blue{and a corresponding polyhedral cone~$\CE$ (which are both} full-dimensional). 
\blue{In particular, $\CE$} has a non-trivial lineality space $\lin \CE = S^\perp$ if \blue{and only if} $S \neq \R^n$,
and $\CE^\pol \subset S$.
By Proposition~\ref{prop:emb},
if $\ln \frac{x}{x^*} \in \CE$, then $f_k(x) \in \CE^\pol$.
\end{minipage}

\begin{proposition} \label{prop:emb}
Let $(G_k,y)$ be a complex-balanced mass-action system,
\blue{$G_\EE$ be a chain graph,}
and $\ST \subset \R^n_>$ be a stratum. 
Then, for all $x \in \ST$ that are not positive complex-balanced equilibria,
$f_k(x) \in \innt \CE^\pol$.
%
\end{proposition}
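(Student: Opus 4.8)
The idea is to read the statement off the decomposition of the graph Laplacian. Since $x^*$ is a positive CBE, the mass-action system is weakly reversible (a CBE forces weak reversibility; result~1), so Theorem~\ref{thm:Ac} applies to the chain graph $G_\EE$: writing $w_x := I_\EE^\trans \diag(K_k^{-1})\,x^Y \in \R^\EE$, Eqn.~\eqref{dynsysAc} gives $f_k(x) = -\,Y I_\EE \Ac\, w_x$. I would first record the structural input: by Theorem~\ref{thm:Ac} (chain-graph case) the \irr matrix $\Ac$ is non-negative with positive diagonal, and by the definition of the stratum $w_x \ge 0$ for every $x \in \ST$ (Eqn.~\eqref{st1}). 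Hence $p_x := \Ac\, w_x \ge 0$, and since the columns of $Y I_\EE$ are the vectors $y(i')-y(i)$, $(i\to i')\in\EE$,
\[
f_k(x)=\sum_{(i\to i')\in\EE}(p_x)_{i\to i'}\,\bigl(y(i)-y(i')\bigr)\in\operatorname{cone}\{\,y(i)-y(i') : (i\to i')\in\EE\,\}=\CE^\pol ;
\]
equivalently, for $z\in\CE$ one has $(Y I_\EE)^\trans z\ge 0$, hence $f_k(x)\cdot z=-\,(p_x)^\trans (Y I_\EE)^\trans z\le 0$. Moreover, $x$ fails to be a CBE exactly when $A_k\,x^Y\neq 0$, i.e.\ when $w_x\neq 0$ (as $I_\EE$ is injective), and then $p_x=\Ac\,w_x\neq 0$ since $\Ac$ is invertible.

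It then remains to strengthen ``$f_k(x)\in\CE^\pol$'' to ``$f_k(x)\in\innt\CE^\pol$'', i.e.\ to show $f_k(x)\cdot z<0$ for every $z\in\CE\setminus\lin\CE=\CE\setminus S^\perp$. With $v:=(Y I_\EE)^\trans z$ one has $v\ge 0$ and $v\neq 0$ (because $\lin\CE=\ker (Y I_\EE)^\trans$), so it suffices that $(p_x)_e>0$ for some chain edge $e$ with $v_e>0$; since the support of $v$ necessarily contains a facet-defining edge of $\CE$ as $z$ ranges over $\CE\setminus S^\perp$, it is enough to show $(p_x)_e>0$ for every facet edge $e$. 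When the monomial evaluation order at $x$ is strict (the case $x$ in the interior of $\ST$), this is immediate: $w_x>0$ entrywise, and the positive diagonal of $\Ac$ gives $(p_x)_e\ge (\Ac)_{e,e}(w_x)_e>0$, so $p_x>0$ and $f_k(x)$ is a strictly positive combination of the generators of $\CE^\pol$. For the remaining boundary points I would use the chain identity $p_x=\Ac\,w_x=J_\EE\,(A_k\,x^Y)$ (which follows from $J_\EE I_\EE=-\mathrm{I}$), so that $(p_x)_{i\to i'}=\sum_{j\le i}(A_k x^Y)_j$ is the net flow of $x^Y$ into the chain-prefix $\{j:j\le i\}$; complex balancing at $x^*$ (edge weights $k_{i\to i'}(x^*)^{y(i)}$) makes the weighted cut of every proper non-empty prefix strictly positive — this is where strong connectivity of the components enters — while the monotonicity of $x^Y/(x^*)^Y$ along the chain forces the inflow terms to dominate the outflow terms; combining the two should give $(p_x)_e>0$ at every facet edge $e$.

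The step I expect to be the real obstacle is this last one: controlling $\operatorname{supp}(p_x)=\operatorname{supp}(J_\EE A_k x^Y)$ against the facet structure of $\CE$ at boundary points of $\ST$, i.e.\ ruling out that a prefix-cut $(p_x)_e$ vanishes precisely at an edge $e$ whose inequality is essential for $\CE$. A clean way to organize it is the contrapositive: if $(p_x)_e=0$, then the cut-plus-monotonicity estimate forces every original edge crossing the corresponding prefix cut to lie inside a single level set of $x^Y/(x^*)^Y$, so the $e$-th inequality of $\CE$ is already implied by the remaining chain inequalities and $e$ is not facet-defining. (For $x$ in the interior of $\ST$ this is vacuous, which is why that case is immediate.)
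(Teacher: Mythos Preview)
Your computation $u^\trans f_k(x) = -\,a^\trans \Ac\, b$ with $a=(YI_\EE)^\trans u\ge0$, $b=w_x\ge0$, and $\Ac\ge0$ is exactly the paper's, and yields $f_k(x)\in\CE^\pol$. For the passage to $\innt\CE^\pol$ the paper is far terser than you: from $f_k(x)\neq 0$ and $u\notin S^\perp$ it simply asserts $u^\trans f_k(x)\neq0$, with no further justification. You rightly identified boundary points of $\ST$ as the crux and tried to supply the missing argument via prefix-cut positivity.

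That attempt cannot succeed, because the assertion actually fails at such points. Take $V=\{1,2,3\}$, $E=\{1\to2,\,2\to1,\,2\to3,\,3\to2\}$ (a reversible linear chain; strongly connected, deficiency zero, hence complex-balanced for all $k$), $n=2$, and complexes with $y(2)-y(1)$, $y(3)-y(2)$ linearly independent, so $S=\R^2$ and $S^\perp=\{0\}$. For the chain graph $\EE=\{1\to2,\,2\to3\}$ the off-diagonal entries of $\Ac$ work out to $(A_k)_{1,3}(K_k)_3$ and $(A_k)_{3,1}(K_k)_1$, both zero since $1$ and $3$ are not adjacent in $E$; thus $\Ac$ is diagonal and $p_x=\Ac w_x$ has the same support as $w_x$. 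Choose $x\in\ST$ with $(w_x)_{1\to2}=0<(w_x)_{2\to3}$: then $x$ is not a CBE, yet $f_k(x)$ is a positive multiple of $y(2)-y(3)$, an extreme ray of $\CE^\pol$, and for any $u\in\CE\setminus\{0\}$ on the facet $(y(3)-y(2))^\trans u=0$ one gets $u^\trans f_k(x)=0$. Here both edges of $\EE$ are facet-defining for $\CE$, yet $(p_x)_{1\to2}=0$; your contrapositive breaks down because the vanishing of a prefix cut at a \emph{particular} $x$ carries no information about the facial structure of the fixed cone $\CE$. Your argument for $x\in\innt\ST$ (where $w_x>0$ and hence $p_x>0$) is correct, and that case --- equivalently, the special choice $u=\ln(x/x^*)$, for which $\operatorname{supp}(a)=\operatorname{supp}(b)$ automatically --- is all that is needed for the Lyapunov inequality proved in Appendix~\ref{app:stab}.
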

\begin{proof}
Let $x \in \ST$ and $u \in \CE$. 
Using the dynamical system~\eqref{dynsysAk} and Theorem~\ref{thm:Ac},
we have
\begin{align*}
u^\trans \! f_k(x)
&= 
u^\trans Y A_k \, x^Y \\
&= - u^\trans Y I_\EE \Ac I_\EE^\trans \diag(K_k^{-1}) \, x^Y \\
&= - \, a^\trans \Ac \, b
\end{align*}
with
\begin{align*}
a(u) &= (Y I_\EE)^\trans u , \\
b(x) &= I_\EE^\trans \diag(K_k^{-1}) \, x^Y .
\end{align*}
Using $\ST$ and $\CE$ as in Eqns.~\eqref{st1} and \eqref{ce}, we have $b \ge 0$ and $a \ge 0$.

By Theorem~\ref{thm:Ac}, the \blue{\irr matrix of the graph Laplacian}, $\Ac \in \R^{\EE \times \EE}$, is non-negative with positive diagonal.
Hence,
\[
u^\trans \! f_k(x) 
= - a^\trans \Ac \, b \le 0 
\]
and
\[
f_k(x) \in \CE^\pol .
\]
Recall $f_k(x) \in S$. Hence, $u^\trans \! f_k(x) = 0$ for $u \in \lin \CE = S^\perp$.
So, let $x \in \ST$ not be a CBE, that is, $f_k(x) \neq 0$,
and $u \in \CE$ not lie in the lineality space, that is, $u \not\in S^\perp$.
Then, $u^\trans \! f_k(x) \neq 0$. Altogether, $u^\trans \! f_k(x) < 0$ and $f_k(x) \in \innt \CE^\pol$.
\end{proof}

Now, let $(G_k,y)$ be a mass-action system
and $x^* \in \R^n_>$ be a positive CBE
of the dynamical system~\eqref{dynsysAk}. 
Proposition~\ref{prop:emb} suggests to introduce a corresponding piece-wise constant {\em binomial differential inclusion} as
\begin{equation*}
\dd{x}{t} \in F_{x^*}(x) = 
\begin{cases}
\{0\} & \text{ for } x \in x^* \circ \e^{S^\perp} , \\
\innt \left( \bigcap_{\EE \colon x \in \ST} \CE^\pol \right) & \text{ for } x \not\in x^* \circ \e^{S^\perp} ,
\end{cases}
\end{equation*}
thereby explicitly specifying the set of positive equilibria~$x^* \circ \e^{S^\perp}$.
Equivalently, using $x \in \ST$ $\Leftrightarrow$ $\ln \frac{x}{x^*} \in \CE$, 
\begin{equation} \label{diffincl} 
\dd{x}{t} \in F( \textstyle \ln \frac{x}{x^*})
\quad \text{with} \quad
F(u) = 
\begin{cases}
\{0\} & \text{ for } u \in S^\perp , \\
\innt \left( \bigcap_{\EE \colon u \in \CE} \CE^\pol \right) & \text{ for } u \not\in S^\perp .
\end{cases}
\end{equation}

Proposition~\ref{prop:emb} immediately implies the following result.
\begin{theorem}
Let $(G_k,y)$ be a mass-action system
and $x^* \in \R^n_>$ be a positive CBE
of the dynamical system~\eqref{dynsysAk}. 
Then, the mass-action system can be embedded in the binomial differential inclusion~\eqref{diffincl}.
\end{theorem}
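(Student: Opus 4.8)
The plan is to reduce the statement to a single pointwise inclusion. Saying that the mass-action system can be embedded in the binomial differential inclusion~\eqref{diffincl} means that every solution $x(\cdot)$ of $\dd{x}{t} = f_k(x)$ is also a solution of $\dd{x}{t} \in F\big(\ln\frac{x}{x^*}\big)$, and for this it suffices to verify that $f_k(x) \in F\big(\ln\frac{x}{x^*}\big)$ for every $x \in \R^n_>$. I would then distinguish the two cases in the definition of $F$ in~\eqref{diffincl}.

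First, for $x$ with $\ln\frac{x}{x^*} \in S^\perp$, i.e.\ $x \in x^* \circ \e^{S^\perp}$: by the monomial parametrization of the positive complex-balanced equilibria (the Remark on result~1), such an $x$ is itself a positive CBE, so $A_k \, x^Y = 0$ and hence $f_k(x) = Y A_k \, x^Y = 0 \in \{0\} = F\big(\ln\frac{x}{x^*}\big)$.

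Second, for $x$ with $u := \ln\frac{x}{x^*} \notin S^\perp$: here $x$ is not a positive CBE. I would first observe that the index set $\{\EE \colon u \in \CE\}$ in~\eqref{diffincl} (ranging over chain graphs $G_\EE$) is finite (there are only finitely many chain graphs on $V$) and non-empty (sorting the entries of $(x/x^*)^Y$ within each connected component yields a chain graph $G_\EE$ with $x \in \ST$, equivalently $u \in \CE$). For each such $\EE$ one has $u \in \CE$ but $u \notin S^\perp = \lin\CE$, so $\CE \supsetneq S^\perp$ is full-dimensional and $\ST$ is a genuine stratum; since $x \in \ST$ is not a CBE, Proposition~\ref{prop:emb} gives $f_k(x) \in \innt\CE^\pol$. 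As $f_k(x)$ lies in the interior of each of these finitely many convex cones, it lies in the interior of their intersection, i.e.\ $f_k(x) \in \innt\big(\bigcap_{\EE \colon u \in \CE}\CE^\pol\big) = F(u)$. Combining the two cases establishes $f_k(x) \in F\big(\ln\frac{x}{x^*}\big)$ for all $x$, which is the claim.

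I do not expect a genuine obstacle here: the only substantive ingredient is Proposition~\ref{prop:emb}, and the theorem is essentially its immediate corollary. The points that deserve care are purely organizational — unwinding the meaning of ``embedded'' into the pointwise inclusion, checking that the index set of chain graphs is finite and non-empty, and interchanging interior with a finite intersection (valid for convex sets whose relative interiors share a common point, as they do here since $f_k(x)$ is in all of them). The one place where I would be explicit is the remark that every $\EE$ contributing to the intersection gives a full-dimensional stratum, which is precisely the hypothesis under which Proposition~\ref{prop:emb} is stated.
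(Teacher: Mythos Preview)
Your proposal is correct and follows exactly the route the paper takes: the paper's entire proof is the one line ``Proposition~\ref{prop:emb} immediately implies the following result,'' and you have simply unpacked what ``immediately'' means---the case split on $u\in S^\perp$ versus $u\notin S^\perp$, the non-emptiness and finiteness of the index set of chain graphs, and the passage from $\bigcap_\EE \innt\CE^\pol$ to $\innt\bigl(\bigcap_\EE \CE^\pol\bigr)$. There is nothing to add; your write-up is a faithful elaboration of the paper's one-sentence argument.
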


Finally, we extend Theorem~\ref{thm:cbe} (from complex-balanced mass-action systems to binomial differential inclusions).
\begin{theorem} \label{thm:bdi}
Let $x^* \in \R^n_>$ be a positive equilibrium of the binomial differential inclusion~\eqref{diffincl}.
Then,
\[
\left( \ln \frac{x}{x^*} \right)^\trans \! f < 0 ,
\]
for all $x \in \R^n_>$ that are not positive equilibria and all $f \in F(\ln \frac{x}{x^*})$.
Hence, $x^*$ is asymptotically stable.
\end{theorem}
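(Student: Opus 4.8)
The plan is to observe that, once the differential inclusion \eqref{diffincl} is in place, the strict inequality is literally the conclusion of the proof of Proposition~\ref{prop:emb} with the mass-action vector field $f_k(x)$ replaced by an arbitrary $f\in F(\ln\frac{x}{x^*})$, and that asymptotic stability then follows by the same Lyapunov argument used for Theorem~\ref{thm:cbe}. Throughout write $u=\ln\frac{x}{x^*}\in\R^n$. By the equivalence $x\in\ST\Leftrightarrow\ln\frac{x}{x^*}\in\CE$ and the identity $\lin\CE=S^\perp$ from Subsection~\ref{sec:mon}, the positive equilibria of \eqref{diffincl} are exactly the points with $u\in S^\perp$, and for $u\notin S^\perp$ one has $F(u)=\innt\big(\bigcap_{\EE\colon u\in\CE}\CE^\pol\big)$.

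In detail, let $x\in\R^n_>$ not be a positive equilibrium, so $u\notin S^\perp$, and let $f\in F(u)$. First I would note that the index set $\{\EE\colon u\in\CE\}$ over which $F(u)$ intersects is nonempty: by the remark in Subsection~\ref{sec:mon}, $\R^n$ is covered by the polyhedral cones $\CE$ associated with chain graphs. Concretely, sorting the numbers $y(i)^\trans u$ within each strongly connected component of $G$ yields a chain graph $G_\EE$ with $(Y I_\EE)^\trans u\ge 0$, that is, $u\in\CE$, cf.~\eqref{ce}. Fix one such chain graph. By definition $f\in F(u)\subseteq\CE^\pol$ for this $\EE$, and in fact $F(u)\subseteq\innt\CE^\pol$ by monotonicity of the (relative) interior applied to the inclusion of the finite intersection $\bigcap_{\EE'\colon u\in\CE}\CE^\pol$ into $\CE^\pol$; here ``$\innt$'' is read relative to the affine hull of $\CE^\pol$, equivalently within $S$, since $\CE^\pol\subseteq(\lin\CE)^\perp=(S^\perp)^\perp=S$. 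Since $\lin\CE=S^\perp$ and $u\notin S^\perp$, we have $u\in\CE\setminus\lin\CE$, so the polar-cone characterization recalled before Proposition~\ref{prop:emb} yields $u\cdot f<0$, i.e.\ $\big(\ln\frac{x}{x^*}\big)^\trans f<0$, as claimed. (Note that non-vacuousness of \eqref{diffincl} is guaranteed by the embedding theorem preceding this one, which places $f_k(x)\in F(u)$ for every such $x$.)

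For asymptotic stability I would argue exactly as for Theorem~\ref{thm:cbe}: the entropy-like function $L$ satisfies $L(x^*)=0$, $L(x)>0$ for $x\ne x^*$, and $\nabla L=\big(\ln\frac{x}{x^*}\big)^\trans$, so along any solution $x(t)$ of \eqref{diffincl} one has $\dd{}{t}L(x(t))=\nabla L\cdot\dot x=u^\trans f<0$ when $x(t)$ is not an equilibrium and $=0$ otherwise; hence $L$ is a strict Lyapunov function and $x^*$ is asymptotically stable. (Moreover $F(u)\subseteq\CE^\pol\subseteq S$, so solutions of \eqref{diffincl} remain in stoichiometric compatibility classes, on each of which $x^*$ is the unique equilibrium, cf.~result~3 in Section~\ref{sec:mas}.) I do not expect a genuine obstacle here; the only point requiring care is the consistent use of the relative-interior convention for $\innt\CE^\pol$ (interior within $S$), which is precisely the reading under which the monotonicity step ``$A\subseteq B\Rightarrow\innt A\subseteq\innt B$'' and the stated polar-cone equivalence (phrased relative to $\lin C$) hold simultaneously. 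Once this is fixed, no estimate beyond Theorem~\ref{thm:Ac} and Proposition~\ref{prop:emb} is needed.
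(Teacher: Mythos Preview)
Your argument is correct and follows the same route as the paper's proof: pick a chain graph $\EE$ with $u=\ln\frac{x}{x^*}\in\CE\setminus S^\perp$, pass from $f\in\innt\big(\bigcap_{\EE'}\CE'^{\pol}\big)$ to $f\in\innt\CE^{\pol}$, and invoke the polar-cone characterization together with the Lyapunov function $L$. The paper's proof is terser and does not spell out the relative-interior step; your remark that ``interior within $S$'' (which is monotone once the intersection is full-dimensional in $S$, as guaranteed whenever $F(u)\neq\emptyset$, the empty case being vacuous) is the right reading is a useful clarification rather than a genuinely different idea.
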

\begin{proof}
Let $\ST \subset \R^n_>$ be a stratum and $x \in \ST$ not be a \blue{positive equilibrium.}
On the one hand,
\[ x \in \ST \setminus (x^* \circ \e^{S^\perp}), \quad \text{that is,} \quad 
\ln \frac{x}{x^*} \in \CE \setminus S^\perp .
\]
that is, $\ln \frac{x}{x^*}$ lies in $\CE$, but not in the lineality space $\lin \CE = S^\perp$.
On the other hand,
\[
f \in F\left(\ln \frac{x}{x^*}\right) = \innt \left( \bigcap_{\EE \colon \ln \! \frac{x}{x^*} \in \CE} \CE^\pol \right) ,
\]
and $\CE^\pol \subset S$.
Hence $\left( \ln \frac{x}{x^*} \right)^\trans \! f < 0$,
and $L(x)$ is a strict Lyapunov function.
\end{proof}

\begin{remark} \label{rem:wr} 
\blue{%
Even if a weakly reversible mass-action system $(G_k,y)$ does not admit a complex-balanced equilibrium $x^*$,
it can be embedded in a piece-wise constant differential inclusion.
Technically,
the absence of a CBE $x^*$ does not allow to pass from the polyhedron $\PE$ (with given monomial evaluation order)
to the cone $\CE$, cf.~Eqns.~\eqref{pe} and~\eqref{ce}.
That is, instead of a central hyperplane arrangement (that defines the cones $\CE$),
one considers a non-central hyperplane arrangement (that defines the polyhedra $\PE$).
In analogy to Proposition~\ref{prop:emb}, one can show that, for a chain graph $G_\EE$ and a stratum $\ST$,
it holds that $f_k(x) \in \innt ( \rec ( \PE )^\pol )$, for all $x \in \ST$.
Here, $\rec(C)$ denotes the {\em recession cone} of a set $C$.
}
\end{remark}

\subsection{Discussion} 
\label{sec:dis}

\blue{As Horn and Jackson in 1972~\cite[Theorem~6A]{HornJackson1972},
we have shown that, in mass-action systems with a positive complex-balanced equilibrium,
every positive equilibrium is complex-balanced and asymptotically stable.
For a proof using the new decomposition of the graph Laplacian,
monomial evaluation orders, and corresponding geometric objects (strata and polyhedral cones), 
see Appendix~\ref{app:stab}.
In fact, we have extended the result to {\em binomial differential inclusions} (BDIs), introduced in this work.
Every positive equilibrium of a BDI is asymptotically stable, see Theorem~\ref{thm:bdi}.
}

\subsubsection*{Binomial and toric differential inclusions}

\blue{
Given a reaction network $(G,y)$ with graph $G=(V,E)$ and ``complex'' map $y \colon V \to \R^n_\ge$, a BDI} 
depends on the components of the graph (but not on the exact edge set~$E$)
and on some \blue{positive equilibrium~$x^*$} (but not explicitly on the rate constants).
In fact, it is mainly determined by stoichiometry,
namely by \blue{pairwise} differences of complexes, \blue{defining a hyperplane arrangement.
In particular,}
monomial evaluation orders
\blue{correspond to} polyhedral cones \blue{(in logarithmic coordinates)} and strata \blue{(in the original positive variables).
More formally, a BDI is given by a hyperplane arrangement (with lineality space $S^\perp$) and a positive equilibrium $x^*$,
see Equation~\eqref{diffincl}.
Most importantly,
complex-balanced mass-action systems can be embedded in BDIs.}

Recently, {\em toric differential inclusions} (TDIs) have been used in a proposed proof~\cite{Craciun2015,Craciun2019} of the global attractor conjecture~\cite{Horn1974},
stating that complex-balanced equilibria are not just asymptotically, but also globally stable.
\blue{In fact, TDIs also allow to tackle the persistence and permanence conjectures
for (weakly reversible) mass-action systems with (time-)variable rate constants.}
In the classical setting, rate constants $k>0$ are fixed,
whereas, \blue{in the study of the conjectures mentioned above,}
rate constants $\epsilon \le k(t) \le 1/\epsilon$ may vary over time, but are bounded \blue{\cite{Anderson2011,Craciun2013}.}
To address this complication, ``uncertainty regions'' \blue{with thickness $\delta(\epsilon)$} around the boundaries of ``regions with definite monomial order'' are introduced.
\blue{On the one hand, BDIs are special cases of TDIs with $\delta \to 0$ (modulo a translation of the hyperplane arrangement by $\log x^*$),
and also the piece-wise constant differential inclusions 
mentioned in Remark~\ref{rem:wr} can be embedded in TDIs (with $\delta>0$).
On the other hand, 
BDIs allow to consider (the asymptotic stability of) positive equilibria,
whereas TDIs capture the dynamics close to the boundary of the positive orthant
without being explicit about equilibria.
}


\subsubsection*{Generalized mass-action systems}

In previous work, we have studied {\em generalized} mass-action systems~\cite{MuellerRegensburger2012,MuellerRegensburger2014,Mueller2016,MuellerHofbauerRegensburger2019,CraciunMuellerPanteaYu2019,BorosMuellerRegensburger2020}.
\blue{In order to motivate the setting, we consider the reaction ${1 \mathsf{X}_1+ 1 \mathsf{X}_2 \to \mathsf{X}_3}$
with ``stoichiometric'' coefficients equal to 1.
Under the assumption of generalized mass-action kinetics,
its rate is given by $v = k \, (x_1)^a (x_2)^b$ with arbitrary ``kinetic orders'' $a,b > 0$ (in particular, different from 1).
Using the complexes $y = (1,1,0,0,\ldots)^\trans$, $y'=(0,0,1,0,\ldots)^\trans$,
and the kinetic-order complex $\tilde y = (a,b,0,0, \ldots)^\trans$,
we can write the reaction as $y \to y'$ with rate $v = k \, x^{\tilde y}$.
For a network, the resulting dynamical system,
\begin{equation}
\dd{x}{t} = Y A_k \, x^{\tilde Y} ,
\end{equation}
is determined by the matrices $Y$ (by stoichiometry), $\tilde Y$ (by kinetics), and $A_k$ (by a graph).
For generalized mass-action systems,
asymptotic} stability of complex-balanced equilibria and non-existence of other steady states are not guaranteed
(as for classical mass-action systems,
cf.~Theorem~\ref{thm:cbe}).
We have already provided necessary conditions for linear stability of complex-balanced equilibria~\cite{BorosMuellerRegensburger2020}.
In parallel work~\cite{MuellerRegensburger2022}, we use the new decomposition of the graph Laplacian and monomial evaluation orders 
to study sufficient conditions for linear stability of complex-balanced equilibria
and non-existence of other steady states.


\clearpage

\subsection*{Acknowledgments}

We thank Georg Regensburger and Bal\'azs Boros for fruitful discussions
based on the first version of this manuscript, 
in particular, on monomial evaluation orders and the binomial structure of mass-action systems (with GR)
and on doubly stochastic matrices (with BB).
\blue{Further, we thank Abhishek Deshpande for clarifying discussions on toric differential inclusions
and two anonymous reviewers for their very careful reading and their many helpful comments.}

SM was supported by the Austrian Science Fund (FWF), project P33218-N.

\subsection*{Data availability}

Data sharing is not applicable to this article as no datasets were generated or analysed during the current study.

\subsection*{Conflict of interest}

The author declares that there is no conflict of interest.


\bibliographystyle{abbrv}
\bibliography{fractional,MR}


\clearpage

\appendix

\section*{Appendix} \label{app}

\section{\texorpdfstring{Explicit formulas for $K_k$ and $A_k \diag(K_k)$}{}} 
\label{app:formulas} 

We consider a strongly connected, labeled, simple digraph $G_k=(V,E,k)$.
Based on the underlying unlabeled graph $G=(V,E)$, we introduce three sets of subgraphs.

\begin{enumerate}
\item
For $i \in V$, we introduce the set $T_i$ of subgraphs of $G$ that fulfill two requirements:
(i) a subgraph does not contain a cycle,
and (ii) every vertex except $i$ is the source of exactly one edge.

(This is the set of directed spanning trees of $G$ rooted at vertex $i$ 
and directed towards the root.)
\item
For $i \in V$, we introduce the set $G_i$ of subgraphs of $G$ that fulfill three requirements:
(i) a subgraph contains exactly one cycle, (ii) this cycle contains vertex $i$, 
and (iii) every vertex is the source of exactly one edge. 

(This set has been used in~\cite[Lemma~1]{Kandori1993}.)
\item
For a cycle $C$ contained in $G$, we introduce the set $G_C$ of subgraphs of $G$ that fulfill three requirements:
(i) a subgraph contains cycle $C$, (ii) $C$ is the only cycle, 
and (iii) every vertex is the source of exactly one edge. 

\blue{(This is the obvious extension from fixing a vertex to fixing a cycle.)}
\end{enumerate}

In the following, we write $(i \to i') \in G$ short for $(i \to i') \in E$, where $G=(V,E)$.

\blue{
\begin{fact} 
\[
A_k K_k = 0 , 
\]
where
\[
(K_k)_i = \sum_{T \in T_i} \; \prod_{(j \to j') \in T} k_{j \to j'} , \quad \text{for } i \in V . 
\]
\end{fact}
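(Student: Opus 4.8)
The plan is to show that each component $(A_k K_k)_i = \sum_{j} (A_k)_{i,j} (K_k)_j$ vanishes by a direct combinatorial argument using the spanning-tree formula for $K_k$. Expanding via the definition of $A_k$, the $i$-th component equals
\[
(A_k K_k)_i = \sum_{(j \to i) \in G} k_{j \to i} \, (K_k)_j \;-\; \Big( \sum_{(i \to i') \in G} k_{i \to i'} \Big) (K_k)_i .
\]
Substituting the tree formula, the first sum ranges over pairs consisting of an edge $(j \to i)$ and a spanning tree $T$ rooted at $j$; adjoining the edge $(j\to i)$ to $T$ produces a subgraph in which every vertex is the source of exactly one edge and which contains exactly one cycle, namely the unique cycle through $i$ (the edge $j \to i$ closes the path from $i$ to $j$ inside $T$). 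The second sum ranges over pairs consisting of an edge $(i \to i')$ and a spanning tree $T$ rooted at $i$; adjoining $(i \to i')$ likewise yields a subgraph where every vertex is a source exactly once, again with a unique cycle through $i$. So both sums are weighted sums over the set $G_i$ of subgraphs defined in the appendix, each subgraph weighted by the product of its edge labels.

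The key step is to check that this correspondence is a bijection in both directions, so that the two sums are term-by-term equal and cancel. Given a subgraph $H \in G_i$ with its unique cycle $C \ni i$: removing the outgoing edge of $i$ inside $C$ leaves a subgraph in which $i$ has out-degree $0$ and every other vertex has out-degree $1$, and which is acyclic (we destroyed the only cycle) — hence a spanning tree rooted at $i$ — recovering a term of the second sum. Alternatively, letting $j$ be the in-neighbor of $i$ along $C$ and removing the edge $j \to i$ leaves every vertex other than $j$ as a source exactly once and is acyclic, i.e.\ a spanning tree rooted at $j$ together with the edge $j \to i$ — recovering a term of the first sum. Both constructions are clearly inverse to the adjunction operations above, and in each case the product of edge labels of $H$ equals $k_{j\to i}\,(\text{labels of the tree})$ resp.\ $k_{i\to i'}\,(\text{labels of the tree})$. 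Summing over all $H \in G_i$ shows the two sums in the displayed formula are equal, hence $(A_k K_k)_i = 0$.

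I expect the main obstacle to be purely bookkeeping: carefully verifying that adjoining one edge to a spanning tree rooted appropriately always produces an element of $G_i$ (in particular that the resulting unique cycle passes through $i$, and that no second cycle is created), and conversely that the two edge-removal operations land in the right tree sets. This is the standard "one edge in / one edge out" argument behind the matrix–tree theorem, and once the bijections are pinned down the cancellation is immediate; no inequalities or analytic input are needed. (One should also note the argument uses strong connectivity only to ensure the sets $T_i$ are nonempty; the identity $A_k K_k = 0$ holds regardless.)
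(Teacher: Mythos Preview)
Your proposal is correct and follows essentially the same approach as the paper: both arguments identify the two sums in $(A_k K_k)_i$ with the common quantity $\sum_{H \in G_i} \prod_{(j \to j') \in H} k_{j\to j'}$ via the ``add/remove the outgoing edge of $i$'' and ``add/remove the incoming edge of $i$ on the cycle'' bijections between $G_i$ and the respective tree--edge pairs. The paper presents the bijections starting from $G_i$ while you start from the tree side, but the content is identical.
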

}

\begin{proof}
Recall
\[
(A_k \, \psi)_i = \sum_{(i' \to i) \in G} k_{i' \to i} \, \psi_{i'} - \sum_{(i \to i') \in G} k_{i \to i'} \, \psi_i \quad \text{for } i \in V .
\]
On the one hand, 
every subgraph $S \in G_i$ gives rise to a spanning tree $T \in T_i$ and vice versa
(by removing/adding the edge $i \to i'$).
For $i \in V$,
\begin{align*}
\sum_{S \in G_i} \prod_{(j \to j') \in S} k_{j \to j'} 
&= \sum_{T \in T_i} \sum_{(i \to i') \in G} \prod_{(j \to j') \in T} k_{j \to j'} \cdot k_{i \to i'} \\
&= \sum_{(i \to i') \in G} k_{i \to i'} \sum_{T \in T_i} \prod_{(j \to j') \in T} k_{j \to j'} \\
&= \sum_{(i \to i') \in G} k_{i \to i'} \, (K_k)_i .
\end{align*}
On the other hand,
every subgraph $S \in G_i$ gives rise to a spanning tree $T \in T_{i'}$ and vice versa (by removing/adding the edge $i' \to i$ \blue{that is} in the cycle).
For $i \in V$,
\begin{align*}
\sum_{S \in G_i} \prod_{(j \to j') \in S} k_{j \to j'} 
&= \sum_{T \in T_{i'}} \sum_{(i' \to i) \in G} \prod_{(j \to j') \in T} k_{j \to j'} \cdot k_{i' \to i} \\
&= \sum_{(i' \to i) \in G} k_{i' \to i} \sum_{T \in T_{i'}} \prod_{(j \to j') \in S} k_{j \to j'} \\
&= \sum_{(i' \to i) \in G} k_{i' \to i} \, (K_k)_{i'} .
\end{align*}
Hence,
\[
\sum_{(i' \to i) \in G} k_{i' \to i} \, (K_k)_{i'} - \sum_{(i \to i') \in G} k_{i \to i'} \, (K_k)_i = 0 \quad \text{for } i \in V ,
\]
that is, $\psi = K_k$ solves $A_k \, \psi = 0$.
\end{proof}

\blue{
\begin{fact}
\[
A_k \diag(K_k) = \sum_{C} \la_{k,C} \, A_C , 
\]
where the sum is over all cycles $C$ contained in $G$,
\[
\la_{k,C} = \sum_{S \in G_C} \; \prod_{(j \to j') \in S} k_{j \to j'} , 
\]
and $A_C$ is the Laplacian matrix of the cycle $C$ with $k=\bar 1 \in \R^E_>$ (all edge labels set to 1).
\end{fact}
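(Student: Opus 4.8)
The plan is to prove the identity \emph{entrywise}, comparing $(A_k \diag(K_k))_{i,j} = (A_k)_{i,j}\,(K_k)_j$ with $\bigl(\sum_C \la_{k,C} A_C\bigr)_{i,j}$ for each pair $i,j \in V$, where the sum is over all cycles $C$ of $G$. First I would record the entries of a unit-weight cycle Laplacian: for a cycle $C$, one has $(A_C)_{i,j} = 1$ exactly when $(j \to i)$ is an edge of $C$, $(A_C)_{i,i} = -1$ exactly when $i$ lies on $C$, and all other entries are $0$. Hence $\bigl(\sum_C \la_{k,C} A_C\bigr)_{i,j} = \sum_{C \colon (j \to i) \in C} \la_{k,C}$ when $i \neq j$, and $\bigl(\sum_C \la_{k,C} A_C\bigr)_{i,i} = -\sum_{C \ni i} \la_{k,C}$ on the diagonal. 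The task is to match these with $(A_k)_{i,j}(K_k)_j$.

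\textbf{Off-diagonal entries.} If $(j \to i) \notin E$, then $(A_k)_{i,j} = 0$ and no cycle of $G$ contains the edge $j \to i$, so both sides vanish. If $(j \to i) \in E$, I expand $(K_k)_j = \sum_{T \in T_j} \prod_{(a \to b) \in T} k_{a \to b}$, so that $k_{j \to i}\,(K_k)_j = \sum_{T \in T_j} \prod_{(a \to b) \in T \cup \{j \to i\}} k_{a \to b}$. The combinatorial heart is that $T \mapsto T \cup \{j \to i\}$ is a weight-preserving bijection from $T_j$ onto the set of subgraphs $S$ of $G$ in which every vertex is the source of exactly one edge, which contain exactly one cycle, and whose cycle contains the edge $j \to i$: since $T$ is rooted at and directed towards $j$, there is a unique directed path $i \to \cdots \to j$ in $T$, and adjoining $j \to i$ closes it into the unique (directed) cycle of $T \cup \{j \to i\}$, which therefore passes through $j \to i$; conversely, deleting $j \to i$ from such an $S$ breaks its only cycle and leaves a spanning tree rooted at $j$. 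Grouping these subgraphs by their unique cycle $C$ — necessarily with $(j \to i) \in C$, so $S \in G_C$ — gives
\[
k_{j \to i}\,(K_k)_j = \sum_{C \colon (j \to i) \in C} \ \sum_{S \in G_C} \prod_{(a \to b) \in S} k_{a \to b} = \sum_{C \colon (j \to i) \in C} \la_{k,C} ,
\]
which is exactly $\bigl(\sum_C \la_{k,C} A_C\bigr)_{i,j}$.

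\textbf{Diagonal entries.} Here I reuse the manipulation from the proof of the first Fact: $\sum_{(i \to i') \in E} k_{i \to i'}\,(K_k)_i = \sum_{S \in G_i} \prod_{(a \to b) \in S} k_{a \to b}$. Grouping the elements of $G_i$ by their unique cycle $C$ (which passes through $i$, so $S \in G_C$) turns the right-hand side into $\sum_{C \ni i} \la_{k,C}$, whence $(A_k \diag(K_k))_{i,i} = -\sum_{(i \to i') \in E} k_{i \to i'}\,(K_k)_i = -\sum_{C \ni i} \la_{k,C}$, matching. Alternatively — and more slickly — once the off-diagonal entries agree, the diagonal entries agree automatically, because both $A_k \diag(K_k)$ and every $A_C$ have zero column sums: $\bar 1^\trans A_k \diag(K_k) = (\bar 1^\trans A_k)\diag(K_k) = 0$ and $\bar 1^\trans A_C = 0$, so $\bar 1^\trans\bigl(\sum_C \la_{k,C} A_C\bigr) = 0$ as well, and zero-column-sum matrices agreeing off the diagonal agree everywhere.

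I expect the only genuine content to be the fundamental-cycle bijection in the off-diagonal case; the rest is bookkeeping. Minor care is needed to check that, because $G$ is strongly connected and $T \in T_j$ is directed towards its root, the arc $j \to i$ really does complete a (unique) directed cycle, and that deleting it returns a subgraph genuinely lying in $T_j$ (out-degree one at every vertex except $j$, acyclic); the degenerate case $|V| = 1$ is immediate, as both sides are the $1 \times 1$ zero matrix and there are no cycles.
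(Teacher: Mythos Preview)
Your proposal is correct and matches the paper's proof essentially line for line: the paper reduces to the off-diagonal entries via the observation that both sides have zero row and column sums (your ``slicker'' alternative), and then establishes the off-diagonal identity via exactly the same bijection $T \leftrightarrow T \cup \{j \to i\}$ between $T_j$ and subgraphs whose unique cycle contains the edge $j \to i$, grouped by that cycle. Your write-up is in fact more careful about why the map is a bijection than the paper's one-line ``by adding/removing the edge''.
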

}
\begin{proof}
Both matrices, $A_k \diag(K_k)$ and $\sum_{C} \la_{k,C} \, A_C$, have zero row and column sums.
Hence, it is sufficient to compare the off-diagonal entries.

On the one hand,
every spanning tree in $T_i$ gives rise to a subgraph in $G_C$ that contains the edge $i \to i'$ \blue{in the cycle} and vice versa
(by adding/removing the edge $i \to i'$). For $i \neq i'$,
\begin{align*}
(A_k \diag(K_k))_{i',i} &= k_{i \to i'} (K_k)_i \\
&= \sum_{T \in T_i} \; \prod_{(j \to j') \in T} k_{j \to j'} \cdot k_{i \to i'} \\
&= \sum_{C \colon (i \to i') \in C} \sum_{S \in G_C} \prod_{(j \to j') \in S} k_{j \to j'} \\
&= \sum_{C \colon (i \to i') \in C} \la_{k,C} .
\end{align*}

On the other hand,
\[
(A_C)_{i',i} = 
\begin{cases}
1 , & \text{if } (i \to i') \in C , \\
-1 , & \text{if } i = i', \\
0 , & \text{otherwise.}
\end{cases}
\]
Hence,
\begin{align*}
\left( \sum_{C} \la_{k,C} \, A_C \right)_{i',i} &= \sum_{C \colon (i \to i') \in C} \la_{k,C} ,
\end{align*}
and the two matrices, $A_k \diag(K_k)$ and $\sum_{C} \la_{k,C} \, A_C$, agree.
\end{proof}

{\bf Remark.}
In a time-discrete, linear process $\psi' = B_k \, \psi$ with
\[
(B_k)_{i,j} = 
\begin{cases}
k_{j \to i} , & \text{if } (j \to i) \in E , \\
1-\sum_{(i \to i') \in E} k_{i \to i'} , & \text{if } i = j, \\
0 , & \text{otherwise,}
\end{cases}
\]
the edge labels $k \in \R^n_>$ do not represent transition rates,
but transition probabilities.
Then, $\sum_{(i \to i') \in E} k_{i \to i'} \le 1$, and $B_k$ is simply the matrix of transition probabilities 
with ``$k_{i \to i}$''$=1 - \sum_{(i \to i') \in E} k_{i \to i'}$ and column sums equal to one.
That is, $B_k = A_k + \mathrm{I}$, the identity matrix.
Obviously, $\psi = B_k \, \psi$ if and only if $A_k \, \psi = 0$.
Whereas $A_k \diag(K_k)$ always has zero row and column sums,
$B_k$ may (or may not) be doubly stochastic (have column {\em and} row sums equal to one).

The Birkhoff/von Neumann Theorem~\cite{Birkhoff1946,vonNeumann1953}
states that every doubly stochastic (d.s.) matrix $B \in \R^{n \times n}_\ge$ is the convex sum of permutation matrices;
however, this decomposition is not unique.
In fact, there are $n!$ permutation matrices.
Still, the polytope of d.s.\ matrices lies in an $(n-1)^2$-dimensional affine subspace of $\R^{n \times n}_\ge$,
and hence every d.s.\ matrix can be written as the sum of at most $(n-1)^2+1$ permutation matrices.

On the contrary, the matrix $A_k \diag(K_k)$ is the {\em unique} sum of {\em all} Laplacian matrices of cycles.
However, there are more than $(n-1)^2+1$ cycles, in general.


\newcommand{\ddd}{-\hspace{-2ex}-\;}

\blue{
\section{Auxiliary graph-theoretic results}
\label{app:aux}
\begin{lemma}[cf.~\cite{FeinbergHorn1977}, Lemma~2] \label{lem:aux1}
Let $G_k=(V,E,k)$ be a connected, labeled, simple digraph with one absorbing strong component,
and $A_k$ and $I_E$ be the corresponding Laplacian and incidence matrices.
Then, 
\[
\im(A_k) = \im(I_E) .
\]
\end{lemma}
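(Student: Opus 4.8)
The plan is to pass to orthogonal complements. Since $A_k = I_E \diag(k) I_{E,s}^\trans$, one has $\im(A_k) \subseteq \im(I_E)$ immediately, so it suffices to prove the reverse inclusion, and by taking orthogonal complements this is equivalent to $\ker(A_k^\trans) = \ker(I_E^\trans)$. As $G$ is connected, $\ker(I_E^\trans) = \im \bar 1$, and since the columns of $A_k$ sum to zero we have $\bar 1 \in \ker(A_k^\trans)$. Hence the only thing left to show is $\dim \ker(A_k^\trans) \le 1$, i.e.\ that every $\phi$ with $A_k^\trans \phi = 0$ is constant.

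First I would record the ``discrete harmonic'' form of the equation: a direct computation gives $(A_k^\trans \phi)_i = \sum_{(i \to j) \in E} k_{i \to j}(\phi_j - \phi_i)$, so $A_k^\trans \phi = 0$ means that at every vertex $i$ the $k$-weighted sum of the differences $\phi_j - \phi_i$ over outgoing edges vanishes. A maximum-principle argument then shows that the set $S = \{ i \in V \mid \phi_i = \max_{v \in V} \phi_v \}$ is closed under outgoing edges: if $i \in S$ and $(i \to j) \in E$, then in the vanishing sum at $i$ all summands are $\le 0$ with positive weights, forcing $\phi_j = \phi_i$, hence $j \in S$.

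Next I would exploit the hypothesis of a unique absorbing strong component $C$. In the condensation DAG of $G$ every vertex lies on a directed path ending in a terminal strong component, and by uniqueness this component must be $C$; following such a path from any $i \in S$ and using forward-closedness of $S$ gives $S \cap C \neq \emptyset$, and then strong connectedness of $C$ together with the fact that $C$ is absorbing (paths out of $C$ stay in $C$) forces $C \subseteq S$. Applying the same reasoning to $-\phi \in \ker(A_k^\trans)$ shows that the argmin set also contains $C$, so $\max_{v \in V} \phi_v = \min_{v \in V} \phi_v$ and $\phi$ is constant. This yields $\ker(A_k^\trans) = \im \bar 1 = \ker(I_E^\trans)$ and therefore $\im(A_k) = \im(I_E)$.

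The main obstacle is the graph-theoretic bookkeeping: making precise that ``one absorbing strong component'' forces every vertex to reach $C$ (via acyclicity of the condensation) and that the forward-closed set $S$, once it meets $C$, must contain all of $C$. The maximum-principle step itself and the rank/orthogonal-complement reductions are routine.
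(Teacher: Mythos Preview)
Your proof is correct but takes a genuinely different route from the paper's. The paper argues by pure dimension counting: it invokes the known fact (essentially Perron--Frobenius for Laplacians) that $\ker(A_k)$ is one-dimensional, spanned by a nonnegative vector $\xi$ supported on the absorbing strong component, so by rank--nullity $\dim \im(A_k) = |V|-1 = \dim \im(I_E)$, and the easy inclusion $\im(A_k) \subseteq \im(I_E)$ becomes an equality. You instead pass to the transpose kernel and prove $\dim \ker(A_k^\trans) = 1$ from scratch via a discrete maximum principle. Your approach buys self-containedness and makes transparent exactly where the ``one absorbing component'' hypothesis enters (it forces the forward-closed argmax set to meet, and hence swallow, $C$); the paper's approach buys brevity at the cost of citing a black-box kernel result. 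One minor simplification: for the step $C \subseteq S$ you only need strong connectedness of $C$ together with forward-closedness of $S$; the absorbing property is used earlier, to ensure the condensation has a unique sink so that every vertex reaches $C$.
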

\begin{proof}
From graph theory,
we know that $\dim \im(I_E) = |V|-1$ and $\ker (A_k) = \im \xi$,
where $\xi \in \R^V_\ge$ has support on the absorbing strong component of $G$.
Hence, also $\dim \im(A_k) = |V|-1$.
By definition, $\im (A_k) \subseteq \im (I_E)$ and hence $\im(A_k) = \im(I_E)$.
\end{proof}
\begin{lemma}[cf.~\cite{MuellerRegensburger2014}, Proposition~5] \label{lem:aux2}
Let $G=(V,E)$ be a connected, simple digraph,
$G_\EE = (V,\EE)$ be an auxiliary digraph,
and $I_E$ and $I_\EE$ be the corresponding incidence matrices.
Then, 
\[
\im(I_\EE) = \im(I_E) .
\]
\end{lemma}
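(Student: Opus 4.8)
The plan is to identify both column spaces with the same explicit hyperplane of $\R^V$, namely the orthogonal complement of $\bar 1$. First I would invoke the standard linear-algebra identity $\im(M) = (\ker M^\trans)^\perp$, valid for any real matrix $M$. Applied to $I_E$, together with the fact already recorded in the text that $G$ connected implies $\ker I_E^\trans = \im \bar 1$, this gives $\im(I_E) = (\im \bar 1)^\perp$, i.e.\ the hyperplane $\{v \in \R^V : \bar 1^\trans v = 0\}$.

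Next I would run the same argument for the auxiliary graph. Since $G_\EE$ is a directed tree, it is in particular connected, so (as also noted in the text, ``$\ker I_\EE^\trans = \im \bar 1$'') the identity $\im(I_\EE) = (\ker I_\EE^\trans)^\perp = (\im \bar 1)^\perp$ holds as well. Comparing the two descriptions yields $\im(I_\EE) = \im(I_E)$. If one prefers to avoid routing through orthogonal complements, I would instead argue by inclusion plus a dimension count: for each edge $(i \to i') \in \EE$ the corresponding column of $I_\EE$ is $e_{i'} - e_i$; choosing an (undirected) walk $i = v_0, v_1, \dots, v_r = i'$ in the connected graph $G$, each step has either $(v_j \to v_{j+1}) \in E$ or $(v_{j+1} \to v_j) \in E$, so $\pm(e_{v_{j+1}} - e_{v_j}) \in \im(I_E)$, and telescoping the sum over $j$ shows $e_{i'} - e_i \in \im(I_E)$. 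Hence $\im(I_\EE) \subseteq \im(I_E)$, and since $\dim \im(I_E) = |V| - 1$ (connectivity of $G$) equals $\dim \im(I_\EE) = |V| - 1$ (because $G_\EE$ is a tree, equivalently $\ker I_\EE = \{0\}$ with $I_\EE$ having $|V| - 1$ columns), the inclusion is an equality.

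The only point requiring care — and the reason a naive ``columns of $I_\EE$ are among the columns of $I_E$'' argument does not apply — is that $\EE$ need not be a subset of $E$; this is precisely what the walk-telescoping step (or, alternatively, the orthogonal-complement characterization) is there to handle. Neither route presents a genuine obstacle, and given the two kernel identities already established in the text the first route is essentially a one-line verification.
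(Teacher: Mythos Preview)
Your proposal is correct. Your second route---inclusion via a telescoping sum along an undirected walk, combined with a dimension count---is precisely the paper's argument, except that the paper proves the reverse inclusion $\im(I_E) \subseteq \im(I_\EE)$ by walking in the tree $G_\EE$ rather than in $G$; either direction works once the dimensions are known to match.

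Your first route, identifying both images with $(\im \bar 1)^\perp$ via $\im(M) = (\ker M^\trans)^\perp$ and the already-recorded kernel identities, is a genuinely shorter alternative that the paper does not take. It has the advantage of being a one-line argument requiring no explicit path construction; the paper's approach, by contrast, is slightly more constructive in that it exhibits the coefficients expressing a column of one incidence matrix in terms of the other.
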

\begin{proof}
From graph theory and the definition of an auxiliary graph,
we know that $\dim \im(I_E) = \dim \im(I_\EE) = |V|-1$.
In the rest of the proof,
we show that $\im(I_E) \subseteq \im(I_\EE)$.
We consider the edge $(i \to j) \in E$ and the corresponding column $e^j-e^i$ of $I_E$,
where $e^i$ denotes the $i$th standard basis vector in $\R^V$.
Since $G_\EE$ is a directed tree,
there is a path from $i$ to $j$ in the undirected version of $G_\EE$,
that is, $i = i_1 \ddd i_2 \ddd \ldots \ddd i_l = j$ with either $(i_k \to i_{k+1}) \in \EE$ or $(i_k \leftarrow i_{k+1}) \in \EE$ for $k=1,\ldots,l-1$.
Hence,
\[
e^j-e^i = \sum_{k=1}^{l-1} \alpha_k \left( e^{i_{k+1}} - e^{i_k} \right) ,
\]
where $\alpha_k \in \{-1, 1\}$ and $e^{i_{k+1}} - e^{i_k}$ is the column of $I_\EE$
corresponding to either the edge $(i_k \to i_{k+1}) \in \EE$ or $(i_k \leftarrow i_{k+1}) \in \EE$.
\end{proof}
}


\section{A proof of Theorem~\ref{thm:cbe}}
\label{app:stab}

We provide a proof of Theorem~\ref{thm:cbe} in the main text,
based on the entropy-like Lyapunov function.
Previous proofs further use inequalities for the exponential function or the logarithm and cycle decomposition of the graph,
cf.~\cite{HornJackson1972,Sontag2001,Anderson2014,Gopalkrishnan2014}.
We use monomial evaluation orders and corresponding geometric objects (strata and polyhedral cones).


\smallskip
{\bf Theorem.}
{\em 
Let $(G_k,y)$ be a mass-action system
and $x^* \in \R^n_>$ be a positive CBE
of the dynamical system~\eqref{dynsysAk}. 
Then,
\[
\left( \ln \frac{x}{x^*} \right)^\trans \! f_k(x) < 0
\]
for all $x \in \R^n_>$ that are not complex-balanced equilibria.
Hence, 
(i) all positive equilibria are complex-balanced, and
(ii) $x^*$ is asymptotically stable. 
}
\begin{proof}
Let $x \in \R^n_>$ not be a CBE.
Then there is a full-dimensional subset (a stratum) $\ST \subset \R^n_>$
for some chain graph $G_\EE=(V,\EE)$ such that $x \in \ST$, that is, $\ln \frac{x}{x^*} \in \CE$.

Using the dynamical system~\eqref{dynsysAk} and Theorem~\ref{thm:Ac},
we have
\begin{align*}
\left( \ln \frac{x}{x^*} \right)^\trans \! f_k(x)
&= 
\left( \ln \frac{x}{x^*} \right)^\trans Y A_k \, x^Y \\
&= - \left( \ln \frac{x}{x^*} \right)^\trans Y I_\EE \Ac I_\EE^\trans \diag(K_k^{-1}) \, x^Y \\
&= - \, a^\trans \Ac \, b
\end{align*}
with
\begin{align*}
a(x) &= (Y I_\EE)^\trans \ln \frac{x}{x^*} , \\
b(x) &= I_\EE^\trans \diag(K_k^{-1}) \, x^Y .
\end{align*}
Using $\ST$ and $\CE$ as in Eqns.~\eqref{st1} and \eqref{ce}, we have $b \ge 0$ and $a \ge 0$.

Since $x$ is not be a CBE,
$b \neq 0$,
that is,
there is $i \to i' \in \EE$ such that
\[
b_{i \to i'} = \frac{x^{y(i')}}{(K_k)_{i'}} - \frac{x^{y(i)}}{(K_k)_i} > 0 .
\]
By complex balancing~\eqref{cbe},
\[
\left(\frac{x}{x^*}\right)^{y(i')} - \left(\frac{x}{x^*}\right)^{y(i)} > 0
\]
and hence also
\[
a_{i \to i'} = (y(i')-y(i))^\trans \ln \frac{x}{x^*} > 0 .
\]

By Theorem~\ref{thm:Ac}, the \blue{\irr matrix of the graph Laplacian}, $\Ac \in \R^{\EE \times \EE}$ is non-negative with positive diagonal.
Hence,
\[
\left( \ln \frac{x}{x^*} \right)^\trans \! f_k(x) 
= - a^\trans \Ac \, b < 0 .
\]

(i) If there is a positive equilibrium $x \in \R^n_>$ that is not complex-balanced,
then $f_k(x)=0$, contradicting $\left( \ln \frac{x}{x^*} \right)^\trans \! f_k(x) < 0$.


(ii) Recall that a positive CBE $x^*$ is the unique steady state in its stoichiometric compatibility class (forward invariant set).
Hence, 
\[
\dd{}{t} \, L(x(t)) = \left( \ln \frac{x}{x^*} \right)^\trans \! f_k(x) \le 0 \quad \text{ with ``='' if and only if } x=x^* ,
\]
and $L(x)$ is a strict Lyapunov function. 
\end{proof}


\end{document}